\theoremstyle{plain}
\newtheorem{theorem}{Theorem}
\newtheorem*{theorem*}{Theorem}
\newtheorem{lemma}{Lemma}
\newtheorem*{corollary*}{Corollary}
\newtheorem{proposition}{Proposition}
\newtheorem{definition}{Definition}
\newtheorem*{remark*}{Remark}
\newtheorem*{conjecture*}{Conjecture}
\newtheorem{conjecture}{Conjecture}
\newtheorem{question}{Question}
\DeclareMathOperator\Ln{Ln}
\DeclareMathOperator\Log{Log}
\newcommand{\ov}[1]{\overline{#1}}
\def\ds{\displaystyle}
\def\R{{\mathbb R}}
\def\C{{\mathbb C}}
\def\Q{{\mathbb Q}}
\def\oQ{\overline{\mathbb Q}}
\def\SA{S\!A}
\def\EL{E\!L}
\begin{document}

\title{Can the quadratrix truly square the circle?}

\author{Luis Cruz$^*$ and Sergiy Koshkin$^\dagger$\\
\\
*YES Prep Public Schools\\
Northbrook High\\
1 Raider Circle\\
Houston, TX 77080\\
e-mail: \texttt{luasrey@gmail.com}\\
\\
\textdagger Corresponding author\\
University of Houston-Downtown\\
Department of Mathematics and Statistics\\
1 Main Street\\
Houston, TX 77002\\
e-mail: \texttt{koshkins@uhd.edu}
}

\date{}

\maketitle

\begin{abstract}
The quadratrix received its name from the circle quadrature, squaring the circle, but it only solves it if completed by taking a limit, as pointed out already in antiquity. We ask if it can square the circle without limits and restrict its use accordingly, to converting ratios of angles and segments into each other. The problem is then translated into algebra by analogy to straightedge and compass constructions, and leads to an open question in transcendental number theory. In particular, Lindemann's impossibility result no longer suffices, and the answer depends on whether $\pi$ belongs to the analog of Ritt's exponential-logarithmic field with an algebraic base. We then derive that it does not from the well-known Schanuel conjecture. Thus, the quadratrix so restricted cannot square the circle after all.


\end{abstract}

\section{Geometric introduction}

The problem of squaring the circle, {\it quadrature} for short, has become a paradigm of the unsolvable problem since it was posed by the ancient Greeks over 2,500 years ago. They already guessed that solving the quadrature with straightedge and compass was impossible and looked for additional tools to do it. One of them was a curve renamed after the problem when Dinostratus used it to this effect---the quadratrix. It was generated mechanically, by composing uniform linear and circular motions, and used to construct a segment equal to the circle's circumference. This is now called {\it rectification} (straightening out) of the circle. The circle could then be squared by a straightedge and compass construction known, at least, since Archimedes. Or so it seemed.

Here is what is less known. Philosopher Sporus objected to the solution with the quadratrix already in antiquity. And ``with good reason", remarked Pappus of Alexandria in his {\it Collection}, one of our main sources on ancient mathematics \cite{Sefrin}. To make a long story short (see Section \ref{Sporus}), the mechanically generated part of the quadratrix {\it does not} produce a segment equal to the circle's circumference. One has to take a limit along the curve to get it. This is cheating. If the task is an exact construction of the square with the circle's area, the ancient Greek construction with the quadratrix does not accomplish it. Moreover, as we will explain, if limits are allowed one can produce approximating segments with straightedge and compass alone: the quadratrix is not needed at all\,!

The question then becomes: is it possible to use the quadratrix, together with straightedge and compass, to square the circle {\it without} chea... taking limits? Classical uses of the quadratrix that meet this condition include dividing the right angle in a given ratio and, conversely, dividing a segment in the same ratio that a given acute angle divides the right angle. Let us call the corresponding tools the {\it right anglesector} and the {\it reverse right anglesector}, respectively. Combining them with straightedge and compass, one can, in particular, perform the {\it general anglesection}, i.e., divide any given angle in the ratio of any two given segments. Unlike the somewhat vague idea of ``using the quadratrix," using these tools leads to a precise mathematical question.
\begin{question}\label{QRRA} Can one square the circle with straightedge, compass, right anglesector, and reverse right anglesector?
\end{question}
Surprisingly (or not), this question is still open today, but the answer is likely negative. The reason why leads us to the fascinating and actively studied field of transcendental number theory, exponential algebra, and its central open conjecture, the Schanuel conjecture \cite{Kir10,Terzo,Zil05}. As far as we know, relations between exponential algebra and geometry have not been explored before.

\section{Algebraic introduction}

Before the impossibility of the quadrature was proved, the problem had to be translated from geometry to algebra, mostly by Ren\'e Descartes and Carl Friedrich Gauss \cite{Hart,LutzenW,Lutzen}. The correct translation is subtle as it has to take into account the iterative nature of straightedge and compass constructions, a point missed by Descartes. We refer to \cite{Hart,JMP,Waerden} for detailed discussions. 

According to the translation (see Section \ref{Numb}), geometric constructions are paralleled by operations on real numbers: field operations and taking square roots of positive numbers. The numbers that can be produced by iterating those, starting from the rational numbers, are called {\it constructible}. Some examples of constructible numbers are $\sqrt{2}$, $\frac{1+\sqrt{5}}{2}$ (golden ratio), and $\sqrt{5-\frac18\sqrt{3}}$. The question about squaring the circle with straightedge and compass becomes the question of whether $\pi$ is constructible. It is to this question that Ferdinand von Lindemann gave a negative answer in 1882. 

His answer relied on subtle algebraic ideas and techniques developed by Niels Henrik Abel, \'Evariste Galois, Joseph Liouville and Charles Hermite, among others \cite{Niven}. Constructible numbers belong to a much larger class of {\it algebraic} numbers, roots of polynomial equations with rational coefficients. Algebraic numbers include even some complex numbers, like the imaginary unit $i$, which is a root of $x^2+1$. The remaining complex numbers are called {\it transcendental}, and Lindemann proved that $\pi$ is transcendental. This is much stronger than was needed, and covers even constructions with additional classical tools, like the marked straightedge \cite{Baragar}, or the angle trisector \cite{Gleason}.

We will show in Section \ref{Numb} what new operations the new tools add to the field operations and square roots. The right anglesector generates $\sin(\pi x)$ for real $x$, and the reverse right anglesector generates $\frac1\pi\arcsin(x)$ for real $x$ with $|x|\leq1$. The question becomes if adding those is enough to generate $\pi$ starting from the rational numbers. It is ironic that the question about generating $\pi$ involves functions with $\pi$ in their defining formulas, but that is only because we use radians for angles when defining $\sin$ and $\arcsin$. If we used $\sin$ and $\arcsin$ for angles measured in degrees, as in \cite{Niven}, then $\pi$ would not appear in the formulas. Still, one can easily see that the standard ways of obtaining $\pi$ as, say, $6\arcsin\left(\frac12\right)$, are not available. At the same time, already numbers like $\sin(\pi\sqrt{2})$ or $\frac1\pi\arcsin\left(\frac13\right)$ are transcendental, so Lindemann's result is insufficient to answer Question \ref{QRRA}. 

Trigonometric functions and their inverses are typically not included in algebra, but in the complex domain they can be reduced to exponential and logarithmic functions. This is a consequence of the Euler's famous formula $e^{ix}=\cos x+i\sin x$. As a result, we can handle them algebraically if we add exponentiation to the field operations on $\C$, a homomorphism from its additive to its multiplicative group \cite{Kir10,Zil05}. The standard choice is $e^x$, but $b^x$ with other $b$ is also an option. It can be defined as $e^{x\ln b}$ by fixing a value of $\ln b$, e.g., the principal value of the logarithm. We are interested in $b=-1$ because $\ln\,(-1)=i\pi$, and so
\begin{equation}\label{exp-1}
(-1)^x:=e^{i\pi x}=\cos(\pi x)+i\sin(\pi x).
\end{equation}
Therefore, $\sin(\pi x)=\frac{(-1)^x-\,(-1)^{-x}}{2i}$, and its inverse, $\frac1\pi\arcsin(x)$, can also be expressed algebraically in terms of $\log_{\,-1}(x):=\frac1{i\pi}\ln x$. 

In the spirit of going from constructible to algebraic numbers, we then build a field by starting from the rational numbers, adjoining roots of polynomials, $(-1)^x$, $\log_{\,-1}(x)$ for every $x$, and iterating those operations (Section \ref{ExpLog}). It turns out that using any other algebraic base $b\neq0,1$ produces the same field as $b=-1$. We denote it $\EL^{alg}$ and call its elements {\it algebraically based} numbers. More conceptually, they can be described as the numbers obtained from the rational numbers by repeatedly taking algebraic closures, and exponents and logarithms of previously generated numbers. The negative answer to Question \ref{QRRA} is then implied by the following conjecture that strengthens Lindemann's result.
\begin{conjecture}\label{ConEL} $\pi$ is not algebraically based.
\end{conjecture}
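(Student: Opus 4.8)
The plan is to assume, for contradiction, that $\pi$ (equivalently $i\pi$, since $i$ is algebraic) is algebraically based, and to extract from this a configuration of complex numbers violating the Schanuel conjecture, which the statement takes as a hypothesis. First I would unwind the inductive definition of $\EL^{alg}$: if $i\pi\in\EL^{alg}$, then $i\pi\in K_n:=\overline{\Q(z_1,\dots,z_n)}$ (relative algebraic closure in $\C$) for some finite chain $z_1,\dots,z_n$ in which, writing $K_k:=\overline{\Q(z_1,\dots,z_k)}$ and $K_0:=\overline{\Q}$, each $z_k$ arises from $K_{k-1}$ by a single application of one of the two new operations: either $z_k=(-1)^{u_k}=e^{i\pi u_k}$ with $u_k\in K_{k-1}$ (an \emph{exp-step}), or $e^{i\pi z_k}=w_k$ for some $w_k\in K_{k-1}\setminus\{0\}$ (a \emph{log-step}). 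Discarding any $z_k$ already lying in $K_{k-1}$, I may take the chain \emph{reduced}, so $z_k\notin K_{k-1}$ for all $k$; since each $K_k$ is relatively algebraically closed, every step then raises transcendence degree by exactly one, whence $\operatorname{trdeg}_{\Q}K_n=n$.

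Attach to each step $k$ its \emph{argument} $\alpha_k$ — namely $u_k$ for an exp-step and $z_k$ for a log-step — and its \emph{value} $\beta_k:=e^{i\pi\alpha_k}$; all the $\alpha_k,\beta_k$ lie in $K_n$. The crucial lemma is that $1,\alpha_1,\dots,\alpha_n$ are linearly independent over $\Q$. I would prove this by induction on $k$: if $\alpha_k$ were a $\Q$-affine combination of $\alpha_1,\dots,\alpha_{k-1}$ (each of which lies in $K_{k-1}$), then at a log-step $\alpha_k=z_k$ would itself be a $\Q$-affine combination of elements of the field $K_{k-1}$, hence $z_k\in K_{k-1}$; and at an exp-step $z_k=e^{i\pi\alpha_k}$ would equal a root of unity $e^{i\pi\cdot(\text{rational})}$ times rational powers of $\beta_1,\dots,\beta_{k-1}\in K_{k-1}$, hence algebraic over $K_{k-1}$ and so in $K_{k-1}$ — either way contradicting reducedness. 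This lemma is exactly where the base $-1$ matters: with the analogous $e$-based operations (Ritt's exponential-logarithmic field) it fails, since there $i\pi=\log(-1)$ is obtained in a single step, whereas the rational-base exponential $x\mapsto(-1)^x$ carries $\Q$ into $\overline{\Q}$, so a rational-coefficient relation among arguments can never manufacture a genuinely new element.

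Finally I would invoke the Schanuel conjecture for the $n+1$ numbers $i\pi,\,i\pi\alpha_1,\dots,i\pi\alpha_n$, which the lemma shows to be $\Q$-linearly independent. It gives
\[
\operatorname{trdeg}_{\Q}\Q\big(i\pi,\,i\pi\alpha_1,\dots,i\pi\alpha_n,\,e^{i\pi},\,e^{i\pi\alpha_1},\dots,e^{i\pi\alpha_n}\big)\ge n+1 .
\]
But $e^{i\pi}=-1\in\Q$, and $\Q(i\pi,i\pi\alpha_1,\dots,i\pi\alpha_n)=\Q(i\pi,\alpha_1,\dots,\alpha_n)$, so the field on the left is generated over $\Q$ by $i\pi$, the $\alpha_k$, and the $\beta_k$, all of which lie in $K_n$ under the standing assumption $i\pi\in K_n$. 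Hence $\operatorname{trdeg}_{\Q}K_n\ge n+1$, contradicting $\operatorname{trdeg}_{\Q}K_n=n$. Therefore $i\pi\notin\EL^{alg}$, which is the assertion of the conjecture.

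I expect the main obstacle to be the linear independence lemma — not because it is technically deep, but because it is the one place where the precise generating operations enter, and carrying it out carefully (including the bookkeeping over branches of $\log_{-1}$, which differ by even integers and so affect neither reducedness nor $\Q$-linear spans) is precisely what distinguishes $\EL^{alg}$ from the $e$-based field, in which $\pi$ \emph{is} present. Everything else is routine transcendence-degree counting once the chain is in place; the sole non-elementary input is the single appeal to Schanuel, and it genuinely cannot be replaced by Lindemann-type results — already the case $n=1$ of the argument requires, for instance, the algebraic independence of $\pi$ and $\log 2$, which is itself open.
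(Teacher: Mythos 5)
Your proposal is correct and follows essentially the same route as the paper's Lemmas \ref{descent}--\ref{ascent}: unwind $\EL^{alg}$ into a finite ladder of exp- and log-steps (Descent), show that reducedness forces $1,\alpha_1,\dots,\alpha_n$ to be $\Q$-linearly independent (Reduction), and apply the Schanuel conjecture to $i\pi,i\pi\alpha_1,\dots,i\pi\alpha_n$ (Ascent). The only difference is cosmetic: where the paper runs an induction applying Schanuel at each rung to track which of $a_k,b^{a_k}$ is transcendental, you invoke it once at the top and conclude by a transcendence-degree count, which yields the same contradiction with slightly less bookkeeping.
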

An analogously constructed field with $b=e$ was introduced by Joseph Ritt, who called its elements ``elementary numbers" by analogy to Liouville's elementary functions \cite{Chow,Lin,Ritt}. Clearly, $\pi=\frac1i\ln\,(-1)$ is ``elementary", but questions about concrete numbers {\it not} being ``elementary" are still open. Nonetheless, many of them can be answered by making one big assumption, the Schanuel conjecture (Section \ref{Schan}). Albeit less known, it plays a role in transcendental number theory similar to the role of the Riemann hypothesis in ordinary number theory---it is believed to be true by most experts and results are commonly proved conditionally on it \cite{Cheng,Chow,Lang71,Lin,Terzo}. 

In the following sections, we will describe the classical rectification and quadrature with the quadratrix, Sporus's objections to it, anglesector tools and their algebraic counterparts, some elementary exponential algebra, and, finally, derive Conjecture \ref{ConEL} from the Schanuel conjecture. In fact, we will derive a stronger claim (Theorem \ref{pinEL}), which may be of independent interest, that many other Ritt's ``elementary" numbers are also not algebraically based. Applied to $e$, it strengthens (conditionally) Hermite's result that $e$ is not algebraic. Even if the Schanuel conjecture is false, the answer to Question \ref{QRRA} may still be negative, but it would be really exciting if an exact quadrature with the quadratrix were possible nonetheless. With this in mind, we will also discuss more liberal uses of the quadratrix than we allowed (but still without taking limits), and whether the circle can be squared with them (Conjecture \ref{ConELhat}).

\section{From rectification to quadrature}\label{Quadrature}

Before introducing the quadratrix, let us explain how to square the circle once its circumference has been rectified, i.e., a straight line segment of the same length has been produced. The constructions involved will also be helpful in translating geometry into algebra in Section \ref{Numb}.

In his Measurement of the Circle, Archimedes proved that a circle's area is equal to the area of the right triangle with the base equal to its circumference and the height equal to its radius, a geometric equivalent of our modern formula \cite[1.6]{Crippa}. Complementing this triangle so as to form a rectangle and cutting it in half, we get a rectangle of the same area, see Figure \ref{Triangcirc}. If $a$ and $b$ are the sides of the rectangle, to find a square of the same area one has to solve $x^2=ab$ geometrically, i.e., to construct a segment of length $x$. The equation is equivalent to the proportion $a:x=x:b$, and ancient Greek geometers called $x$ the mean proportional between $a$ and $b$. The next proposition shows how it can be constructed with straightedge and compass. The square with the rectangle's area is then constructed in Figure \ref{MeanProp}\,(b).
\begin{figure}[!ht]
\vspace{-0.1in}
\begin{centering}
(a) \includegraphics[scale=.25]{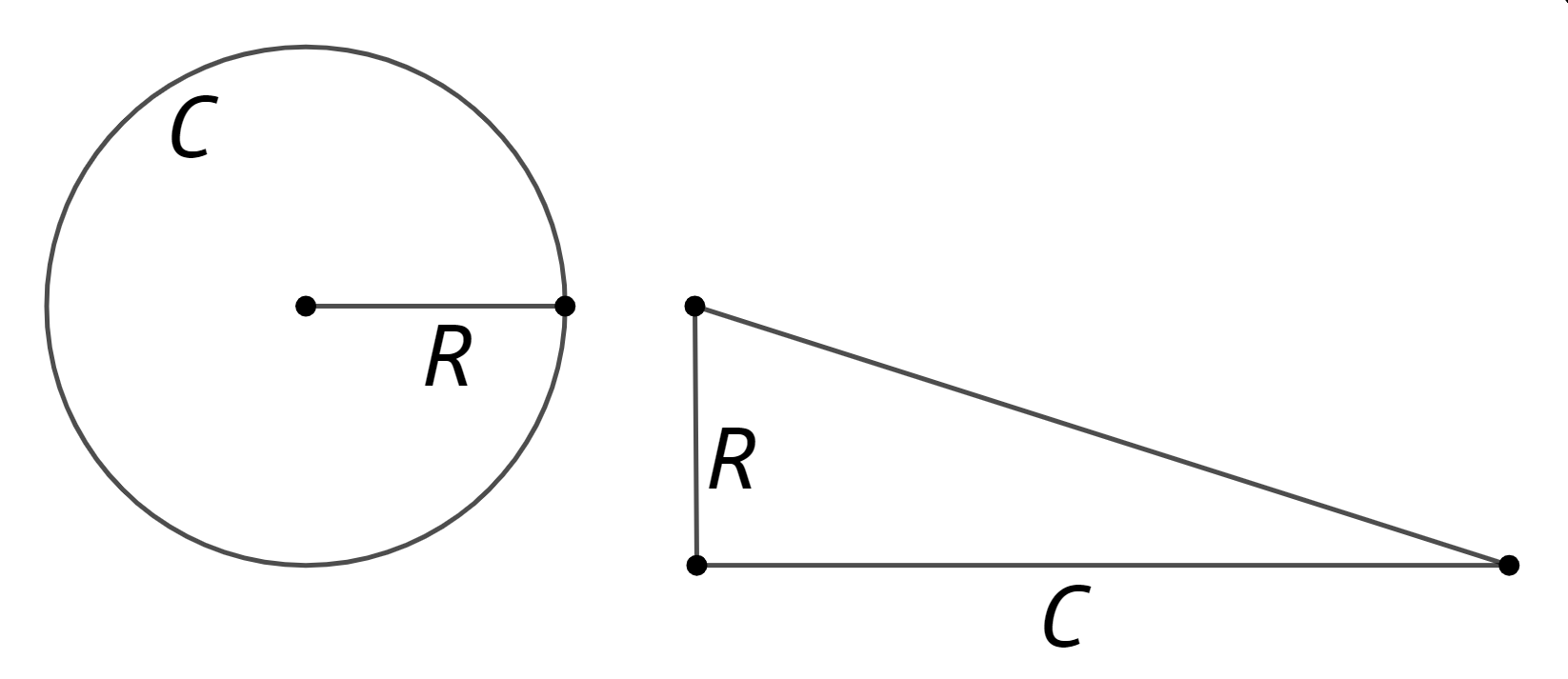}
\hspace{0.1in} 
(b) \includegraphics[scale=.25]{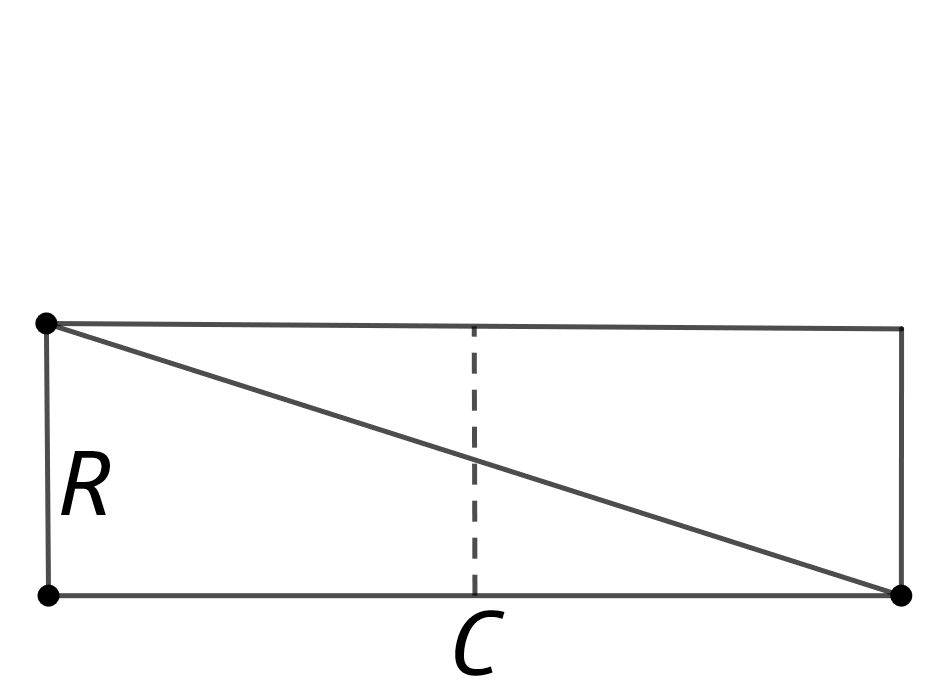}
\hspace{0.1in}
\par\end{centering}
\vspace{-0.2in}
\hspace*{-0.1in}\caption{\label{Triangcirc} (a)  A triangle equal in area to the circle; (b) constructing a rectangle equal in area to a triangle.}
\end{figure}
\begin{proposition}[\textbf{Inserting the mean proportional}]\label{InsMean} Given segments of lengths $a$ and $b$, one can construct a segment of length $x$ satisfying the proportion $a:x=x:b$ with straightedge and compass. 
\end{proposition}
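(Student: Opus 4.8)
The plan is to give the classical geometric mean construction via a semicircle and Thales' theorem. First I would place the two given segments end to end along a straight line: mark points $P$, $Q$, $R$ collinear with $PQ = a$ and $QR = b$, so that $PR = a+b$. This uses only the straightedge to draw the line and the compass to transfer the lengths $a$ and $b$. Next I would bisect the segment $PR$ to locate its midpoint $O$ (the standard perpendicular-bisector construction with compass and straightedge), and draw the semicircle with center $O$ and radius $OP = \tfrac{a+b}{2}$, which passes through both $P$ and $R$.

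The key step is then to erect the perpendicular to $PR$ at the point $Q$ and let it meet the semicircle at a point $S$; I would take $x := QS$. To see that this $x$ satisfies $a:x = x:b$, I would invoke Thales' theorem: since $PR$ is a diameter, the inscribed angle $\angle PSR$ is a right angle, so triangle $PSR$ is right-angled at $S$ with $SQ$ the altitude to the hypotenuse. The two smaller triangles $PQS$ and $SQR$ are each similar to $PSR$ and hence to each other (they share the right angle at $Q$ and have equal acute angles by the angle sum), which yields the proportion $PQ : QS = QS : QR$, i.e., $a : x = x : b$. Equivalently one gets $x^2 = ab$ directly from the geometric-mean relation for the altitude.

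I do not anticipate a genuine obstacle here, since every ingredient — transferring a length, bisecting a segment, erecting a perpendicular at a point — is a standard straightedge-and-compass subroutine, and Thales' theorem supplies the proportion immediately. The one point to be slightly careful about is the degenerate or boundary behavior: one should note that the perpendicular at $Q$ does meet the semicircle (it does, because $Q$ lies strictly between $P$ and $R$ when $a,b>0$), so $S$ and hence $x>0$ genuinely exist. I would close by remarking, as the proposition's surrounding text already indicates, that applying this with the rectangle's sides $a$ and $b$ produces the side $x$ of a square with the same area, completing the passage from rectification to quadrature; the figure reference \ref{MeanProp}(b) then illustrates assembling the actual square from the segment $x$.
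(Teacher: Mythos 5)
Your proposal is correct and follows essentially the same route as the paper: lay the segments end to end, erect the perpendicular at their common point to the semicircle on the combined segment as diameter, and deduce $a:x=x:b$ from the right angle inscribed in the semicircle and the similarity of the two sub-triangles. Your extra remarks on the existence of the intersection point and the degenerate cases are a harmless (and slightly more careful) addition, but the construction and justification are identical to the paper's.
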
 
\begin{proof} The construction is shown in Figure \ref{MeanProp}\,(a). We place the segments on a straight line next to each other, then divide the merged segment in half, and draw a semicircle on it as the diameter. The perpendicular erected on the diameter from their common point $D$ up to its intersection with the semicircle is the segment we are looking for. 

Indeed, since $\angle{ACB}$ is an angle inscribed into a semicircle, it is a right angle. Therefore, the triangles $ACD$ and $CBD$ are both right triangles and $\angle{ACD}=\angle{DBC}$ because they both complement $\angle{CAD}$ to the right angle. This means that they are similar triangles and $AD:CD=CD:BD$, or $a:x=x:b$ with $x$ being the length of $CD$. 
\end{proof}
\begin{figure}[!ht]
\vspace{-0.1in}
\begin{centering}
(a) \includegraphics[scale=.3]{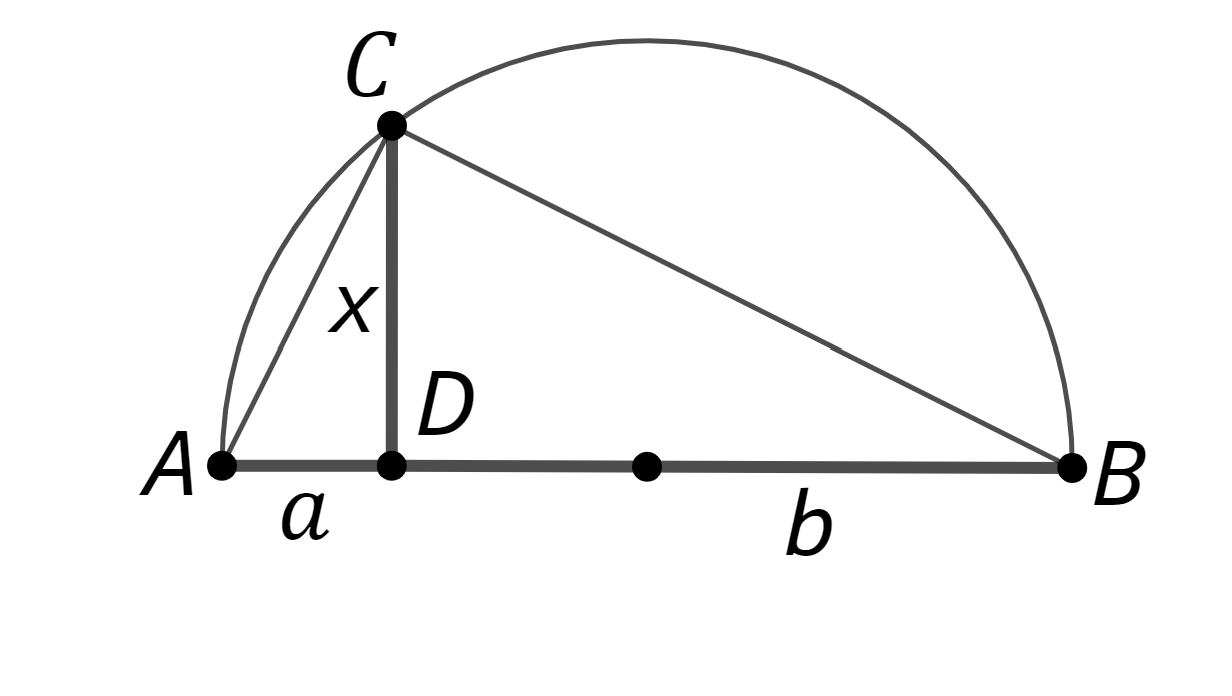}
\hspace{0.1in} 
(b) \includegraphics[scale=.3]{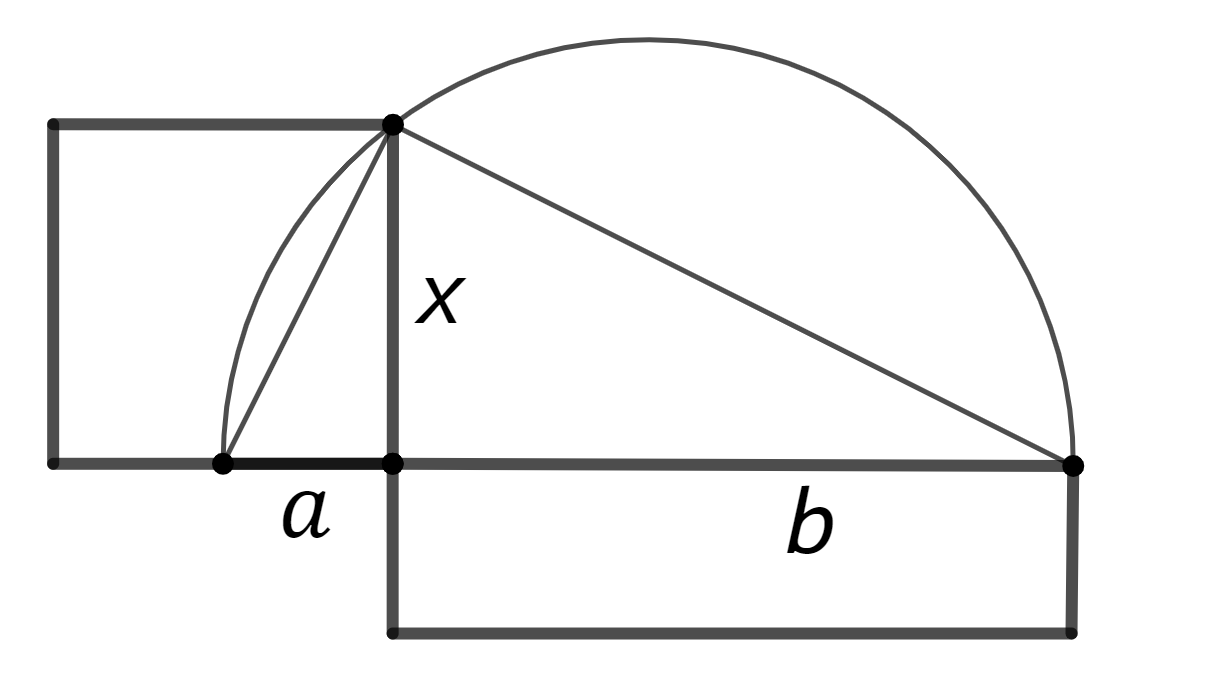}
\hspace{0.1in}
\par\end{centering}
\vspace{-0.2in}
\hspace*{-0.1in}\caption{\label{MeanProp} (a) Construction of the mean proportional; (b) squaring a rectangle by inserting the mean proportional.}
\end{figure}

Now suppose that we somehow managed to obtain a segment equal to the circumference of some specific circle, i.e., to rectify that circle. Then we can rectify (and, therefore, square) {\it any} circle with straightedge and compass. It is done by geometric scaling, what ancient Greeks called finding the fourth proportional. 
\begin{proposition}[\textbf{Finding the fourth proportional}]\label{Find4} Given segments of lengths $a$, $b$ and $c$, one can construct a segment of length $x$ satisfying the proportion 
$x:a = c:b$ with straightedge and compass.
\end{proposition}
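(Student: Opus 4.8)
The plan is to reproduce, in elementary form, the intercept theorem (Thales' theorem): two transversals cut by a pair of parallel lines determine proportional segments. Concretely, I would first draw two rays from a common point $O$, meeting at any convenient angle (a right angle works fine). On the first ray I would lay off with the compass the point $B$ with $OB = b$ and the point $C$ with $OC = c$; on the second ray I would lay off the point $A$ with $OA = a$. Then I would draw the segment $BA$ and construct the line through $C$ parallel to $BA$, letting $X$ be its intersection with the second ray. The claim is that $OX$ is the segment we want.

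To verify this, observe that because $CX \parallel BA$, the triangles $OBA$ and $OCX$ have equal angles (a common angle at $O$ and equal corresponding angles along the rays), hence are similar; therefore $OB : OC = OA : OX$, i.e. $b : c = a : OX$. Rearranging this proportion gives $OX : a = c : b$, so $x := OX$ satisfies the required relation. The one remaining point to spell out is that the parallel to $BA$ through $C$ is itself a straightedge-and-compass object: one drops the perpendicular from $C$ onto the line $BA$ and then erects the perpendicular to that line at $C$, both standard routines, so nothing is added beyond straightedge and compass.

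I do not expect a genuine obstacle here; the proposition is a classical construction and the argument is short. The only thing that needs care is the bookkeeping of the proportion --- one must place the segment $b$, not $c$, on the same ray as $a$, since interchanging them would instead yield the length $ab/c$. Degeneracies are harmless: taking the angle at $O$ to be a right angle and using nonzero lengths keeps every triangle in sight nondegenerate, and the order of $B$ and $C$ on the first ray may be swapped freely according to whether $b < c$ or $b > c$. Combined with Proposition \ref{InsMean}, this lets one first scale a single rectified circle to any other circle and then square it, completing the reduction of the quadrature to rectification.
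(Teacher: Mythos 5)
Your construction is correct, and it rests on the same underlying principle as the paper's --- similar triangles sharing the vertex angle at a projection center $O$ --- but the concrete realization differs. The paper places $a$ and $b$ on one line, erects two successive perpendiculars to obtain a line parallel to the first carrying the length $c$, and then \emph{finds} the center $O$ as the intersection of $GG'$ with $AA'$; this forces a case split ($b=c$ is degenerate because $GG'$ and $AA'$ become parallel, and $b<c$ versus $b>c$ changes the picture). You instead \emph{start} from the vertex $O$, lay off $b$ and $c$ on one ray and $a$ on the other, and construct the parallel to $BA$ through $C$; the price is one parallel-through-a-point construction (which you correctly reduce to two perpendiculars), and the payoff is that no case analysis is needed, since $BA$ meets the second ray at $A$ and hence no line parallel to $BA$ can miss that ray. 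Your bookkeeping of the proportion is right ($OB:OC=OA:OX$ gives $x=ac/b$, matching the paper's $A'D'$), and your closing remark about swapping $B$ and $C$ freely is exactly the uniformity the paper's version lacks. The only cosmetic point: any nonzero angle at $O$ works, so singling out the right angle is a convenience rather than a necessity for nondegeneracy.
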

\begin{proof}
When $b=c$, the construction is trivial as $x=a$. For the case $b<c$, the construction is shown in Figure \ref{fourthprop}\,(a). Erect the perpendicular $AA'$ to $AD$  of length $a$, and then another perpendicular $A'G'$ of length $c$ to $AA'$.
Choose $G$ on $AD$ so that the segment $AG$ has length $b$. Construct the line $GG'$ and extend it until it intersects the line $AA'$ with $O$ as the intersection point. Then construct the line $OD$ and extend it to the intersection with $A'G'$ with $D'$ as the intersection point. Then $A'D'$ is the fourth proportional of length $x$.

Indeed, triangles $OAG$ and $OA'G'$ both have a right angle, and share $\angle AOG$ by construction. Therefore, their third angles are also equal. By the same reasoning, $OAD$ and $OA'D'$ also have three equal angles. Hence, triangle $OAG$ is similar to $OA'G'$ and triangle $OAD$ is similar to $OA'D'$. Thus,  $A'D':AD = OA':OA = A'G':AG$, or $x:a = c:b$. The case $b>c$ is analogous.
\end{proof}
\begin{figure}[!ht]
\vspace{-0.1in}
\begin{centering}
\ \ (a)\ \includegraphics[scale=.262]{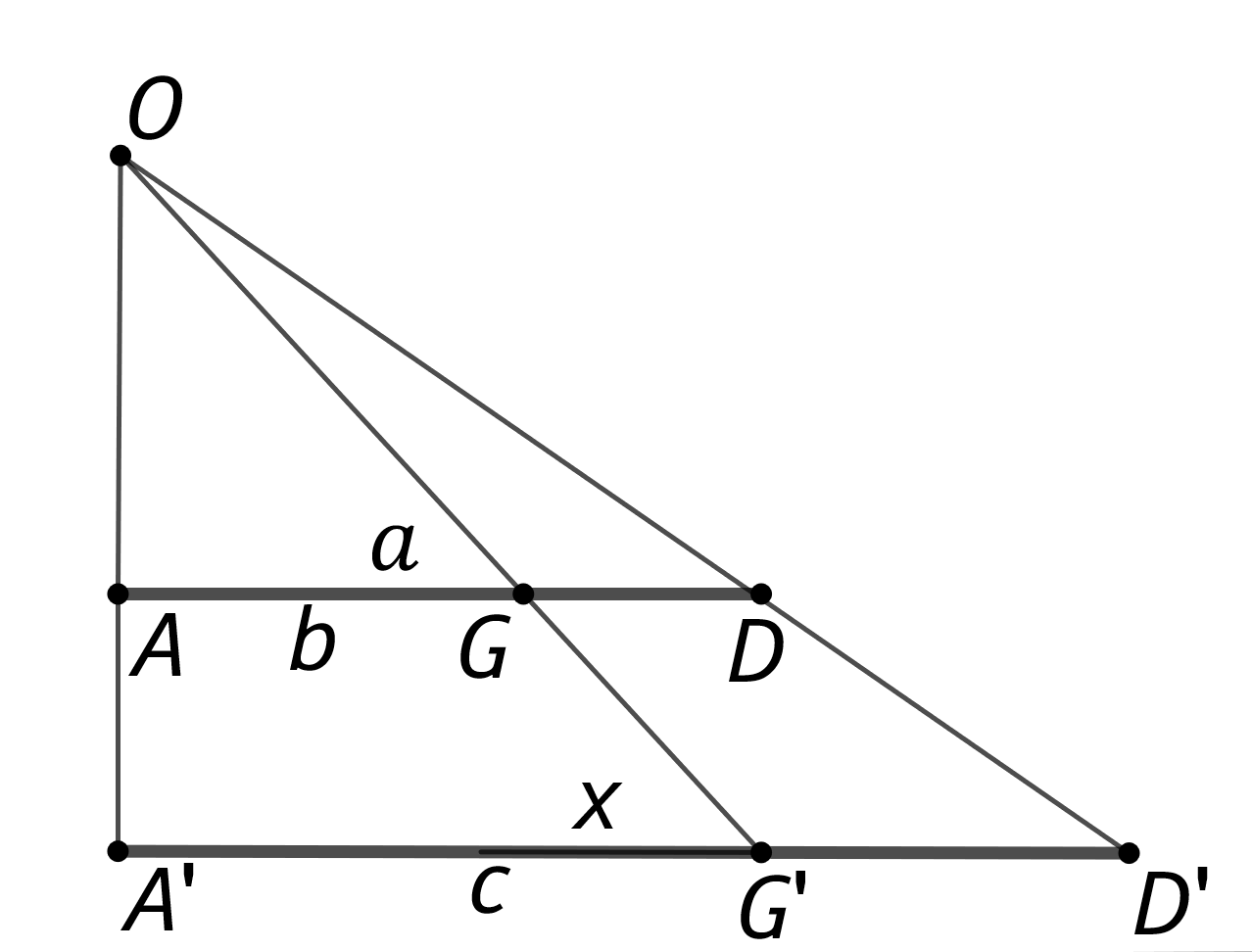}
\hspace{4em} (b)\ \includegraphics[scale=0.113]{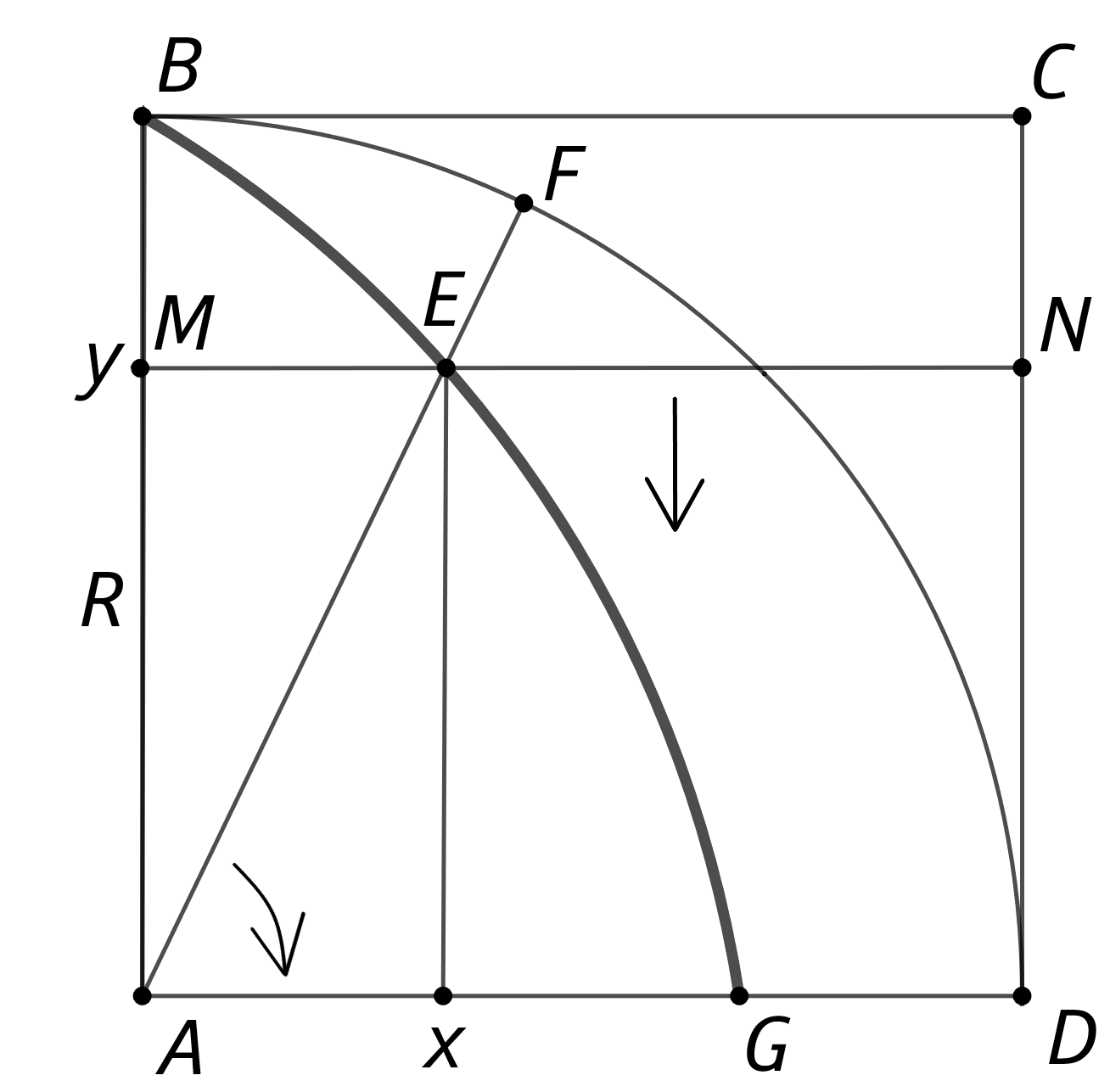}
\par\end{centering}
\vspace{-0.1in}
\hspace*{-0.1in}\caption{\label{fourthprop} (a) Construction of the fourth proportional; (b) genesis of the quadratrix.}
\end{figure}

Given circles with the radii $R_1$, $R_2$ and the rectified circumference of the first, $C_1$, the fourth proportional will be exactly the rectified circumference of the second, $C_2$. Thus, if we can construct two line segments that are to each other as the circumference of some circle is to its radius then we can square any circle. It is to the former task that the quadratrix was applied since antiquity.

\section{Rectification with the quadratrix}\label{Quadratrix}

The quadratrix, translated as ``square maker," is traced by the intersection point of two line segments in uniform motion, one linear, the other circular. According to the traditional story, which follows Proclus, Hippias of Elis introduced it originally c.\,420 BC to trisect an angle, and its original name was trisectrix. It is only c.\,350 BC that Dinostratus used it to square the circle. Little is known about Dinostratus, other than that he was the brother of Menaechmus who introduced conic sections into Greek geometry. Pappus confirms that Dinostratus (and Nicomedes) used the quadratrix for the quadrature, but the traditional story is controversial among historians \cite[II.5.4]{Sefrin}. 

To generate the quadratrix, begin with a square and move its top side uniformly down while rotating the left vertical side uniformly clockwise. Synchronize the motions so that both reach the bottom side at the same time, see Figure \ref{fourthprop}\,(b). The characteristic property of the quadratrix, implied by the uniformity of the generating motions, is that the leftmost point $M$ of the descending segment divides $AB$ in the same ratio as the rightmost point $F$ of the rotating segment divides the arc $BFD$, and hence also the right angle at the bottom left corner of the square.  

Denoting $R:=|AB|$, $x:=|ME|$, $y:=|AM|$, $\theta:=\angle DAF$, and measuring angles in radians, we have $\frac{\pi}{2}:\theta=R:y$ and $y=x\tan\theta$.
Therefore, $R:x=\pi\tan\theta:2\theta$. 
Let $G$ be the `terminal' point, where the quadratrix intersects the bottom side of the square. Since $\ds{\tan\theta:\theta\to1:1}$ when $\theta\to0$ we have $R:|AG|=\pi:2$. Therefore, $4R:|AG|=2\pi:1$, and they are to each other as the circumference of a circle is to its radius. This means that the circle of radius $|AG|$ can be rectified with the quadratrix. As we know from the previous section, this allows us to square any circle using only straightedge and compass.

\section{Sporus's objections}\label{Sporus}

Upon closer inspection, not all is well with the above rectification. Two objections to it were raised by Sporus, a philosopher approvingly quoted by Pappus \cite{Sefrin}. 

First, in order to synchronize the rotational motion of $AB$ with the linear motion of $BC$ in Figure \ref{fourthprop}\,(b), the ratio of the arc $BD$ to $AB$ needs to be known. Indeed, the speeds of the motions must be in this ratio for the descending and the rotating segments to arrive at the bottom of the square simultaneously. But this ratio is $\pi:2$, and it is the very ratio that we used the quadratrix to produce. Thus, Sporus objected, this `solution,' as presented, is logically circular.      
\begin{figure}[!ht]
\vspace{-0.1in}
\begin{centering}
\ \ \ (a)\,\includegraphics[scale=.11]{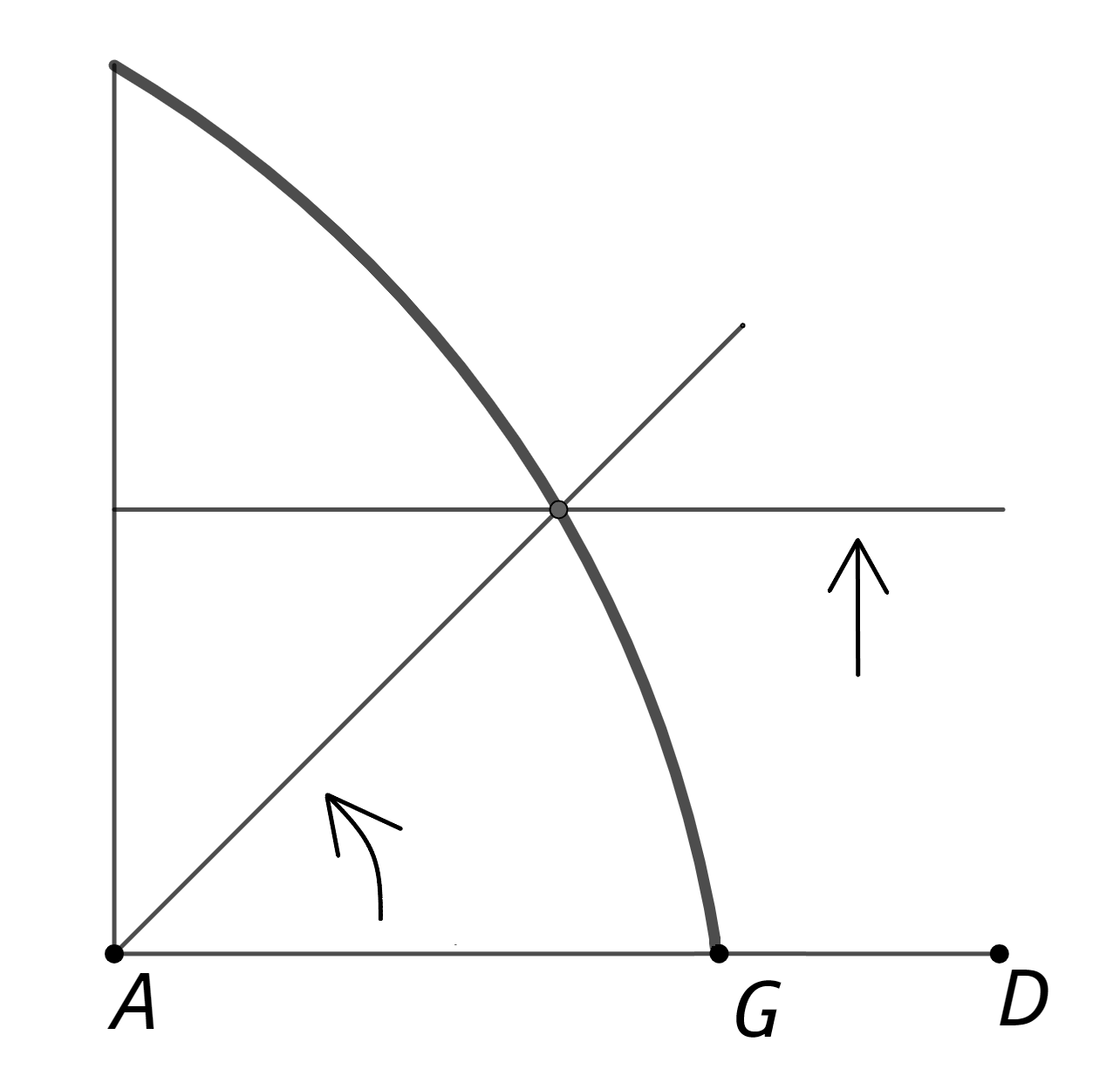}
\hspace{7em} (b)\,\includegraphics[scale=.22]{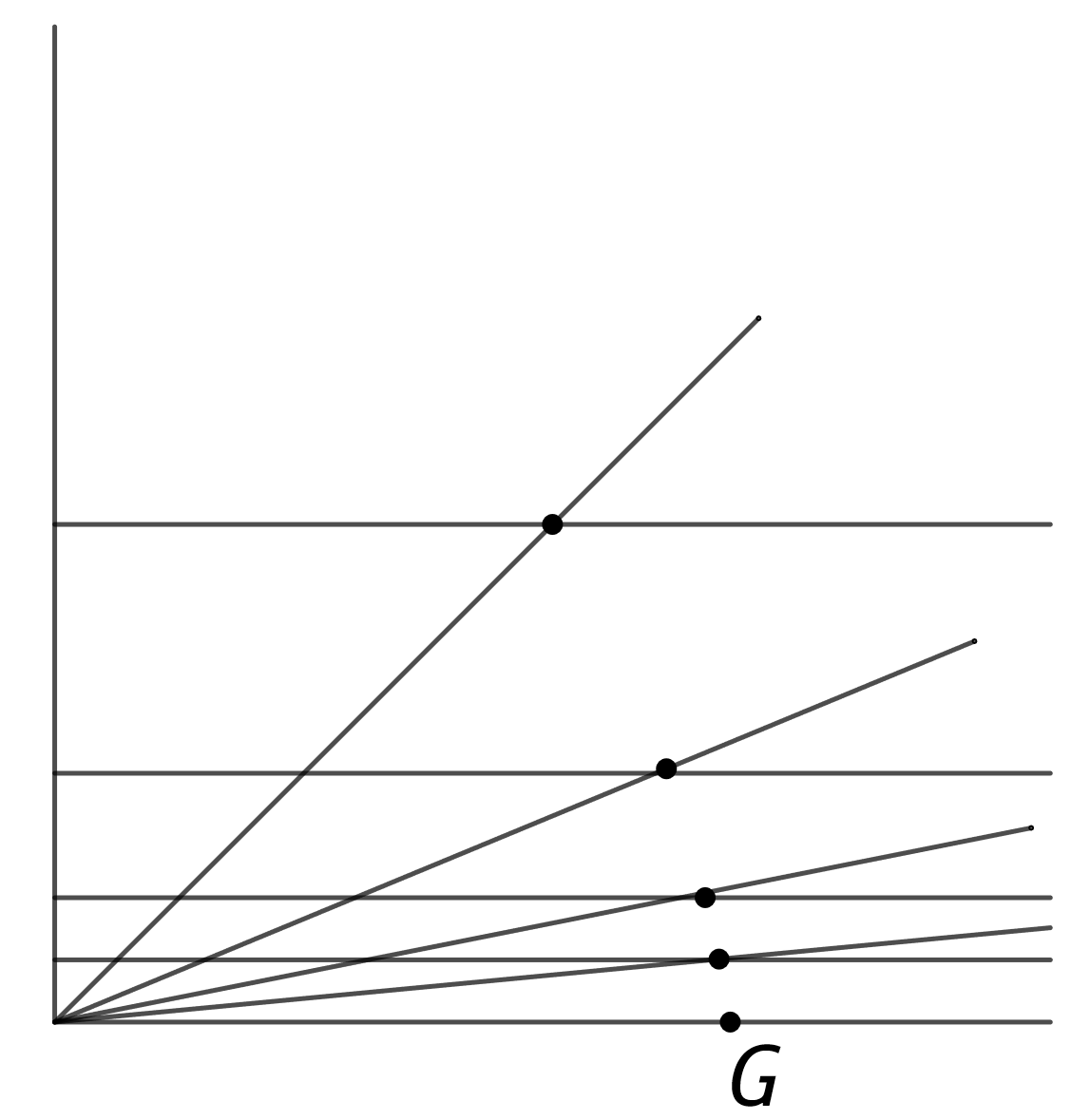}
\par\end{centering}
\vspace{-0.1in}
\hspace*{-0.1in}\caption{\label{quadbypts} (a) Bos's regenesis of the quadratrix; (b) points on the quadratrix constructed by repeated bisection that converge to the limit point.}
\end{figure}

The good news is that Pappus repaired it already in antiquity \cite{Sefrin}. He used 3D constructions, but in the 20th century, a Dutch historian of mathematics Henk Bos gave a much simpler fix \cite{Bos}. In Bos's regenesis, the pre-drawn square, which required synchronizing the motions, is not used, and the  directions of both motions are reversed, see Figure \ref{quadbypts}\,(a). From the same horizontal starting position, one line moves uniformly up, and the other uniformly rotates counterclockwise. Their intersection point still traces a quadratrix, which eventually intersects the vertical line. 

The bad news is that the point $G$ on $AD$ that we need for rectification is still not produced by this process. The ascending and the rotating lines coincide at the starting position, so there is no intersection point. The best we can do is to find $G$ by taking a limit along the quadratrix. Sporus's second objection was to this taking of the limit.

Indeed, the task was to give an {\it exact} construction of a square with the same area as the circle, not its approximation as a limit. Allowing approximations trivializes it. After all, we can inscribe regular polygons with $2^n$ sides into the circle and square them, all with straightedge and compass. We can even construct a sequence of points on the quadratrix converging to the limit point $G$ with straightedge and compass. Such a construction was proposed in 1604 by a Renaissance mathematician Christopher Clavius, who mistakenly thought that it produces every point on the quadratrix \cite{Mancosu}. Just bisect the right angle and the vertical segment successively as in Figure \ref{quadbypts}\,(b). The corresponding intersection points form a sequence that converges to the limit point $G$. Why bother with motions and curves at all?

The worse news is that the use of limits is not specific to the quadratrix, it pops up in solutions with other mechanical curves. For example, consider another curve generated by combining uniform linear and circular motions, the Archimedean spiral. Pappus credits Conon of Samos for its discovery c.\,245 BC \cite{Knorr}. Conon was a mathematician and astronomer who  became friends with Archimedes and shared his work with him. The curve was named after Archimedes because of extensive use of it in his book {\it On Spirals}, c.\,225 BC. Its shape might have been inspired by the water pumping screw, now also called the screw of Archimedes, which was in use at the time and which he perfected. 

Archimedes defines the spiral in his book as the path of a point moving uniformly along a straight line, which, at the same time, uniformly rotates about a fixed point \cite{Knorr}.
\begin{figure}[!ht]
\vspace{-0.1in}
\begin{centering}
\ \ \ (a)\,\includegraphics[scale=.26]{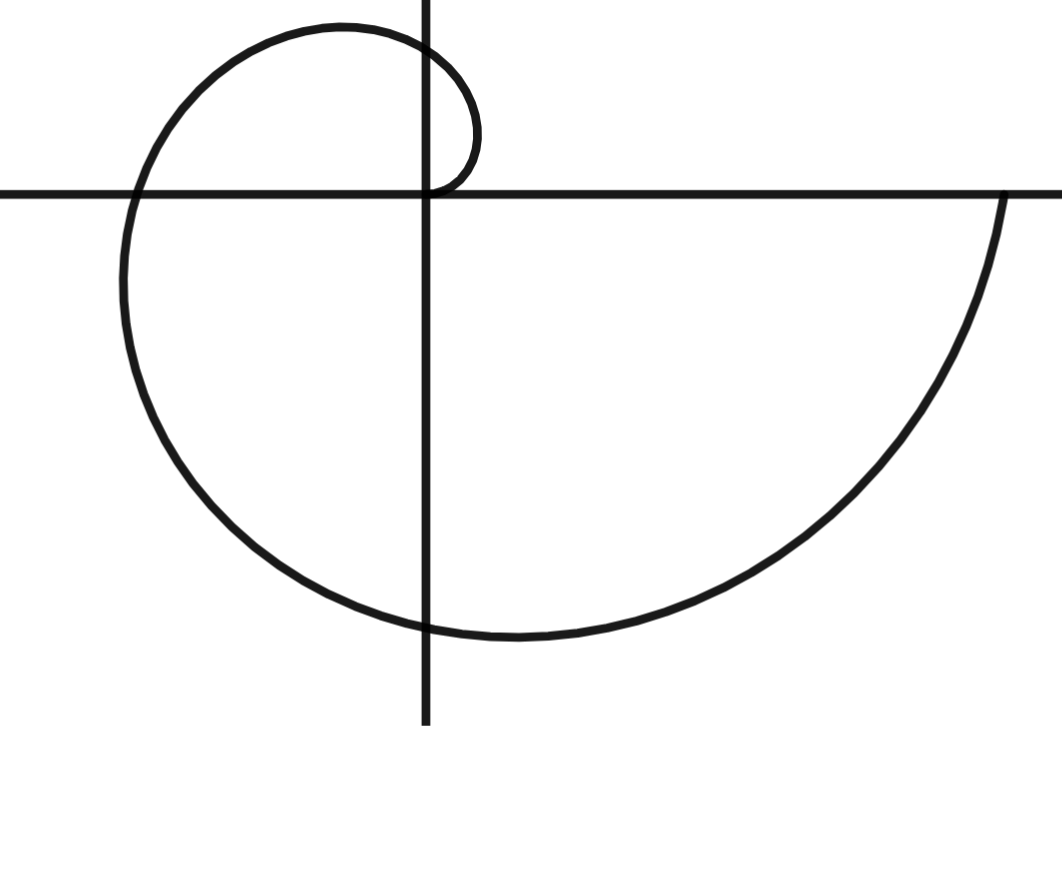}
\hspace{6em} (b)\,\includegraphics[scale=.11]{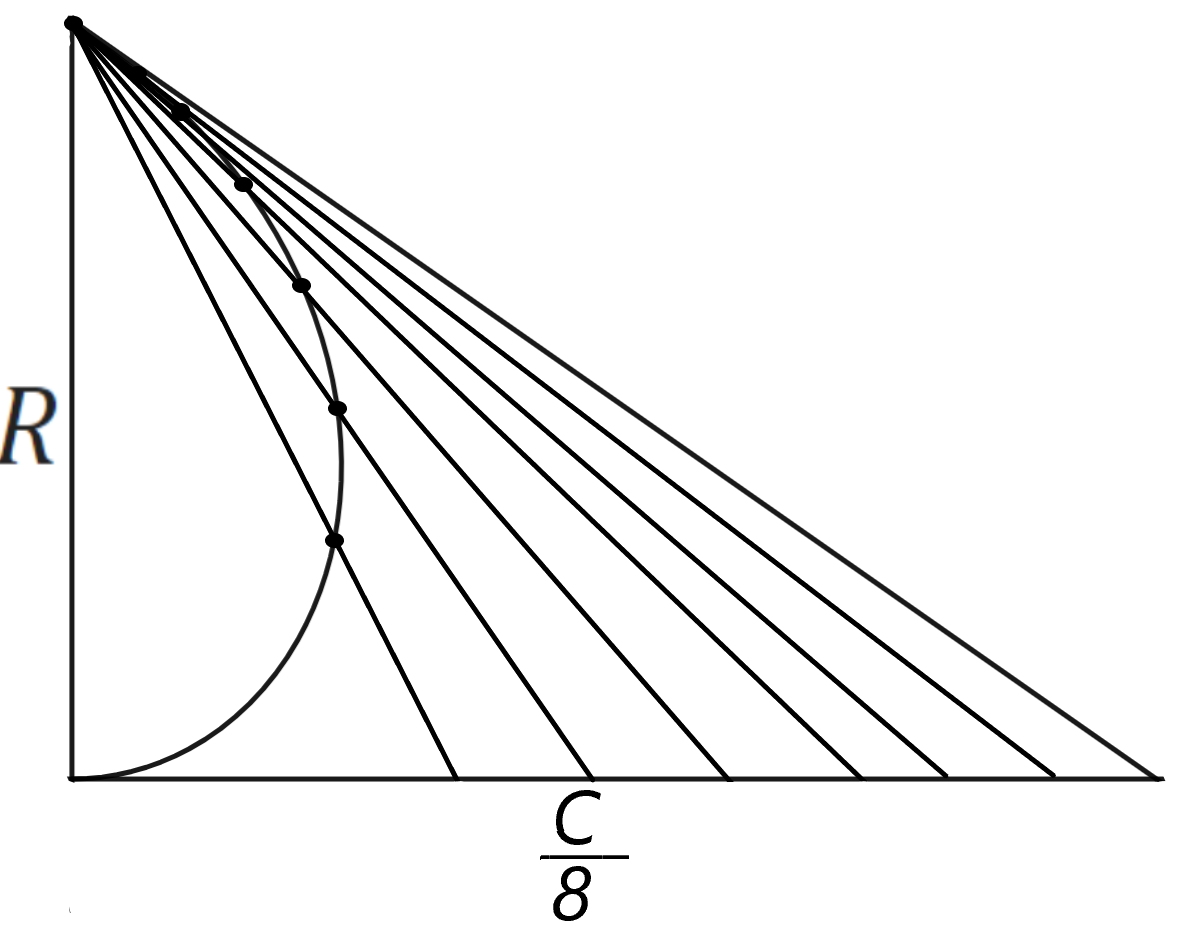}
\par\end{centering}
\vspace{-0.15in}
\hspace*{-0.1in}\caption{\label{archspiral} (a) Archimedian spiral after one full rotation; (b) rectifying the circle with the Archimedian spiral.}
\end{figure}
A single turn of the Archimedean spiral is shown on Figure \ref{archspiral}\,(a). Archimedes himself wisely refrained from using it for the quadrature, but his successors repurposed one of his theorems to do just that. Let $R$ be the distance from the quarter-turn point on the spiral to the initial point. The theorem in question states that the tangent to the spiral at the quarter-turn point will cut a segment on the initial tangent equal to one eighth of the circumference of the circle of radius $R$, see Figure \ref{archspiral}\,(b). Thereby, we get a triangle with the area equal to a rational multiple of the area of the circle, and some straightedge and compass fiddling will then square the circle.

However, even if the spiral is already drawn there is no (known) way to construct a tangent to it with straightedge and compass. The intuitive idea of fitting a straightedge to the curve until they `touch' is not an exact construction, although it may work well enough in practice. And when we try to formalize it, we are back to approximating a limit. We can draw a secant line to the spiral through the point of tangency and some nearby point, see Figure \ref{archspiral}\,(b). As the nearby point approaches the point of tangency, the secants will approach the tangent. But this is just a variation on approximating the limit point on the quadratrix. 

At this point, one may start to suspect that a limit process is an unavoidable feature of squaring the circle with mechanical curves like the quadratrix or the Archimedian spiral. If that is, indeed, the case then the answer to the title question of this paper would be negative. No cheating---no quadrature. Adding mechanical curves to straightedge and compass does not allow squaring the circle in the exact sense that ancient Greeks intended. But how do we prove it?

\section{General anglesection}\label{Anglesec}

The lesson of Sporus's second objection is that we may not use the limit point on the quadratrix. But what can we still construct without it? As already mentioned, the quadratrix was originally used to trisect an angle, i.e., divide it into three equal parts. The limit point is not needed for that. Pappus even showed that one can divide any given angle in any given ratio (of two segments) without it as well \cite{Bos}.

\begin{figure}[!ht]
\vspace{-0.1in}
\begin{centering}
\ \ \ (a)\,\includegraphics[scale=.23]{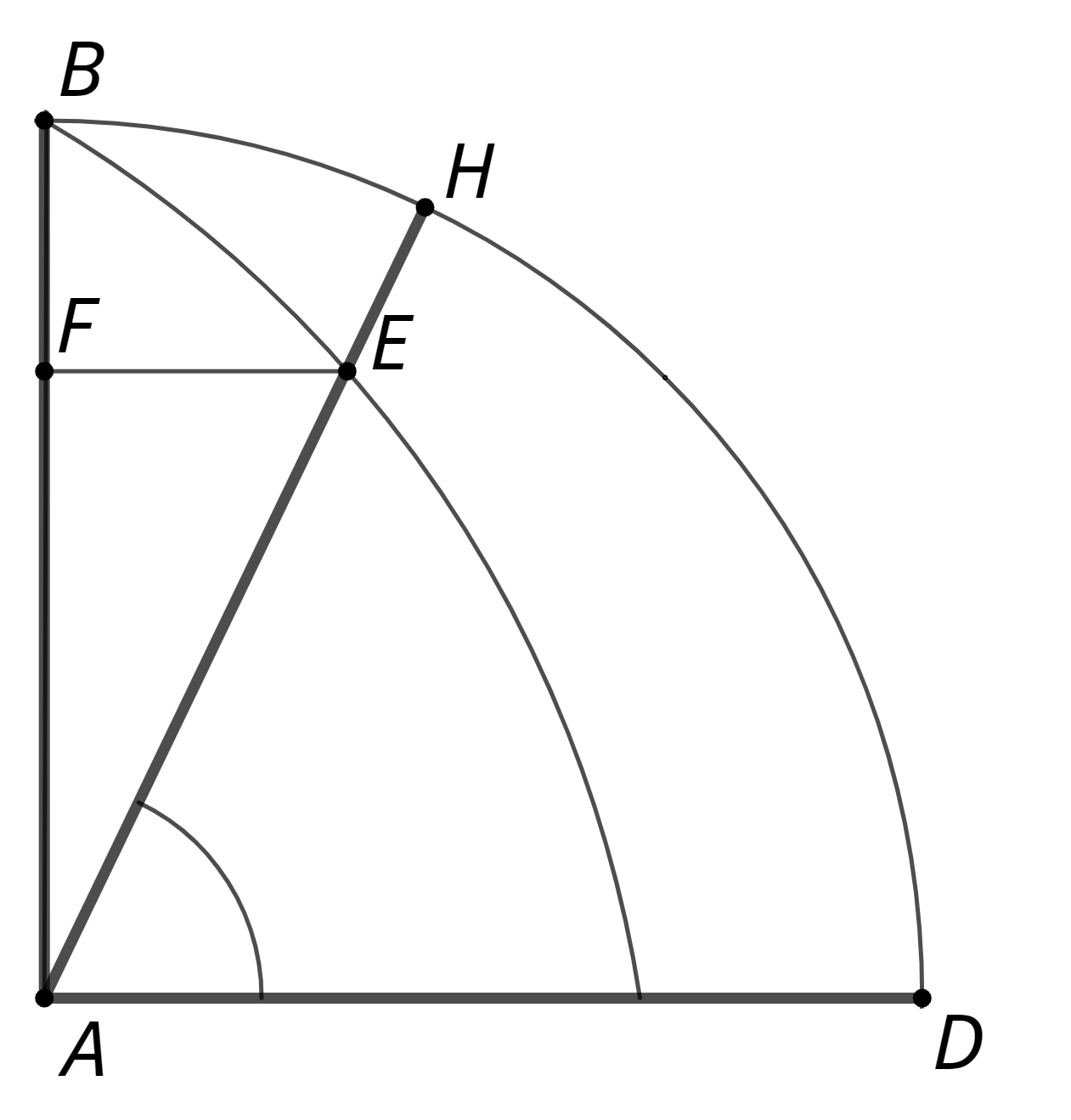}
\hspace{6em} (b)\,\includegraphics[scale=.23]{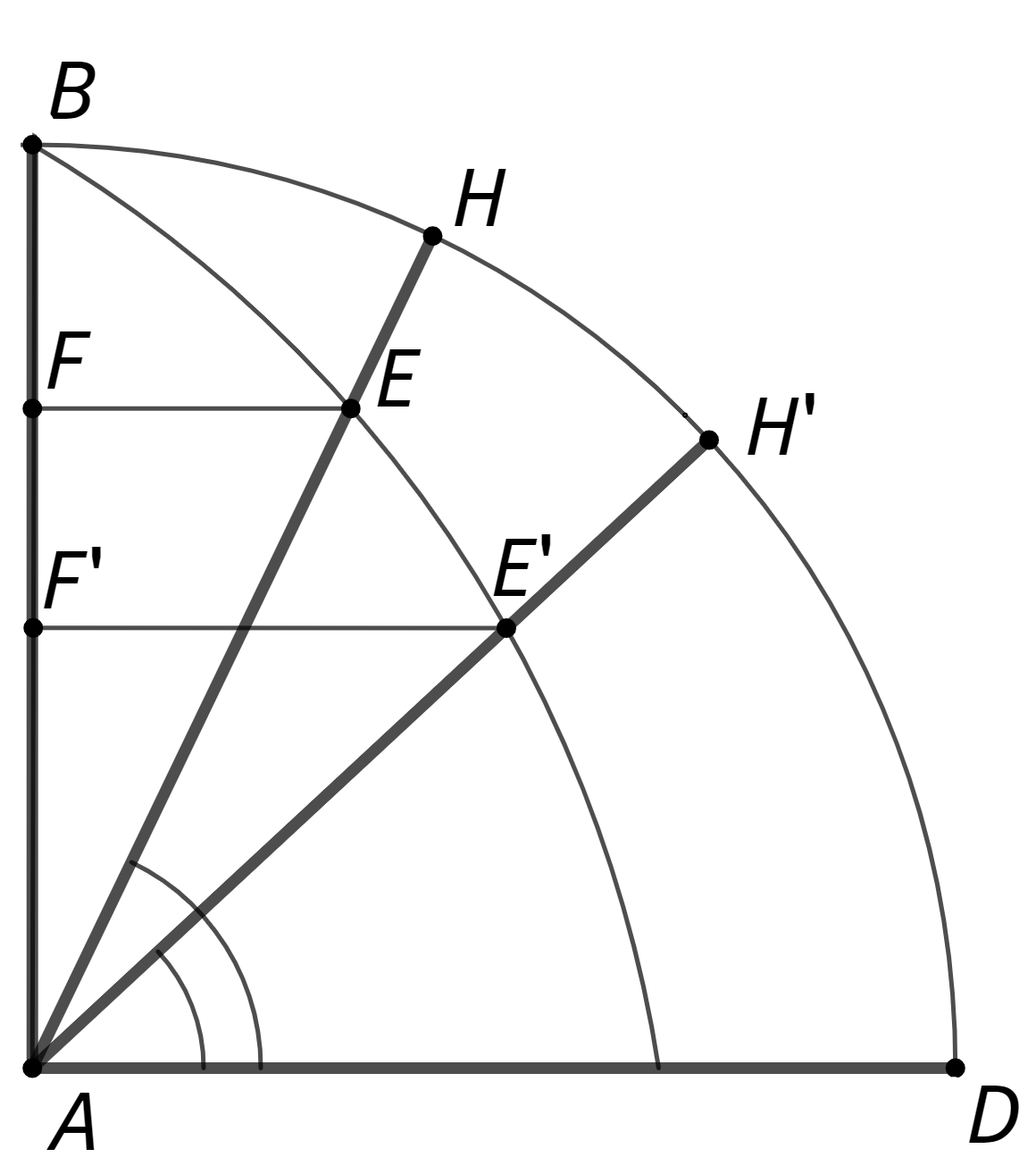}
\par\end{centering}
\vspace{-0.15in}
\hspace*{-0.1in}\caption{\label{anglesectorlbl} (a) Right anglesection with the quadratrix: $\angle DAH$ divides the right angle in the same ratio as $F$ divides $AB$; (b) acute anglesection with the quadratrix: $\angle DAH'$ divides $\angle DAH$ in the same ratio as $F'$ divides $AF$.}
\end{figure}

Let us divide the right angle first, see Figure \ref{anglesectorlbl}\,(a). Given a segment $AB$ cut in some ratio by $F$, construct the quadratrix on segments $AB$ and $AD$ equal to $AB$. Erect the perpendicular to $AB$ at $F$, and let $E$ be its intersection point with the quadratrix. Draw the line $AE$ until it intersects the circular arc $BD$ at $H$. It follows from the uniformity of linear and circular motions that $H$ cuts the arc $BD$, and hence the right angle $\angle DAB$, in the same ratio as $F$ cuts the segment $AB$. 

Now let us divide some acute angle $\angle DAH$ in a given ratio, see Figure \ref{anglesectorlbl}\,(b). First, we can divide $AB$ in the same ratio as $\angle DAH$ divides the right angle $\angle DAB$ by simply repeating the above construction in reverse. Namely, draw the line $AH$ and drop the perpendicular to $AB$ from its intersection point $E$ with the quadratrix to get $F$. Then, divide $AF$ by $F'$ in the given ratio, and, finally, divide the right angle in the ratio $AF':AB$ to get $H'$. Thus, $\angle DAH':\angle DAH=AF':AF$ as desired.

Since any angle is the sum of an acute angle with an integer multiple of the right angle, and we can divide both in a given ratio, we can divide any given angle in that ratio. This is the {\it general anglesection} and no part of it requires using the limit point on the quadratrix. Pappus showed that the general anglesection is possible with the Archimedean spiral as well and does not require drawing a tangent to it either. Bos even suggested that it ``seems to be the only rationale" for combining motions as in the quadratrix and the Archimedean spiral \cite{Bos}. We will be more generous and take both the right anglesection and its reverse separately as quadratrix's legitimate uses. Can we square the circle with them only? 

\section{From geometry to algebra}\label{Numb}

Historically, questions about the possibility or impossibility of geometric constructions were only answered when they were converted into algebraic questions about numbers. To see the correspondence, pick a segment in the plane and declare it a unit segment, i.e., having length $1$. Take the line through it as the horizontal axis, one of its endpoints as the origin, and set up Cartesian coordinates in the plane using it as the scale. 
\begin{definition} A real number is called constructible when its absolute value is the length of a segment that can be constructed starting from the unit segment with straightedge and compass. A point is called constructible when both its Cartesian coordinates are constructible. 
\end{definition}
Ancient Greeks resisted such non-intrinsic assignment of numbers to geometric objects, so the idea had to wait until Descartes in the 17th century. He answered, in part, which numbers are constructible with his algebra of segments \cite[3.13]{Hart}, a collection of geometric constructions on segments that correspond to numerical operations on their lengths. The algebra of segments shows that constructible points answer their name---they really are constructible with straightedge and compass.  

However, historians showed that Descartes did not fully appreciate the iterative nature of straightedge and compass constructions in his impossibility arguments  \cite{Lutzen}. For example, he thought that angle trisection, which leads to a cubic equation with three roots, is impossible with straightedge and compass simply because a single intersection of lines and circles produces at most two points. That one can intersect them repeatedly, thereby producing roots of equations of arbitrarily high degree, was only taken into account by Gauss in his 1801 work on inscribing regular polygons into the circle. Below, we sketch an argument for the translation of straightedge and compass constructions into algebraic operations. For a more detailed and formal presentation of the induction involved, the reader is referred to \cite{Hart,JMP,Waerden}.

Curiously, Gauss thought so little of impossibility results that he mentioned only in passing and without proof that a $9$-gon cannot be inscribed with straightedge and compass, which implied the impossibility of the trisection. It was only proved by Wantzel in 1837 following Abel's reframing of impossibility results and based on Gauss's algebraic translation of straightedge and compass constructions  \cite{LutzenW}. 
\begin{theorem}[Descartes, Gauss]\label{DesGau} A real number is constructible if and only if it can be obtained from $1$ by applying a finite number of algebraic operations and takings of $\sqrt{x}$ for previously constructed
positive numbers $x$.
\end{theorem}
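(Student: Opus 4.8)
The plan is to prove the two implications separately, treating the ``if $\Rightarrow$ constructible'' half as the routine one and the converse as the substantive one, since it is exactly there that the iterative nature of the constructions (the point Descartes overlooked) must be handled with care.

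\emph{Sufficiency.} First I would show that the set $K\subseteq\R$ of constructible numbers is a subfield of $\R$ closed under taking square roots of its positive elements. Laying segments end to end or measuring their difference along a line handles addition and subtraction (and negation, so $K=-K$); the fourth-proportional construction of Proposition~\ref{Find4}, applied with $b=1$ or with $c=1$, produces $ac$ and $a/b$ from constructible $a,b,c$, so $K$ is closed under multiplication and division; and the mean-proportional construction of Proposition~\ref{InsMean}, applied with $b=1$, produces $\sqrt{a}$ from any constructible $a>0$. Since $1\in K$, we get $\Q\subseteq K$, and hence every number obtainable from $1$ by field operations and square roots of previously obtained positive numbers lies in $K$. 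One also notes, using the Pythagorean theorem together with one more square root, that the distance between two constructible points is a constructible number and that a point is constructible iff both its coordinates are; this reconciles the two notions of constructibility that will be used below.

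\emph{Necessity.} A straightedge-and-compass construction is a finite list of steps, starting from the endpoints $(0,0)$ and $(1,0)$ of the unit segment, in which each step either (i) draws the line through two already-constructed points, (ii) draws the circle centered at an already-constructed point with radius equal to the distance between two already-constructed points, or (iii) adjoins a new point that is an intersection of two already-constructed lines, of a line and a circle, or of two circles. I would prove by induction on the number of steps the statement: \emph{there is a tower $\Q=F_0\subseteq F_1\subseteq\cdots\subseteq F_m$ of subfields of $\R$ with $F_{j+1}=F_j(\sqrt{d_j})$ for some $d_j\in F_j$ with $d_j>0$, such that the coordinates of every point constructed so far lie in $F_m$.} Granting this, every constructible coordinate---hence, via the length/coordinate dictionary from the previous paragraph, every constructible number---is obtained from $\Q$ (thus from $1$) by field operations and successive square roots of previously obtained positive numbers, which is precisely the assertion.

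\emph{The inductive step.} Suppose the coordinates of all points obtained so far lie in a field $F$ of the stated form. A line through two points of $F$ has an equation $\alpha x+\beta y=\gamma$ with $\alpha,\beta,\gamma\in F$, and a circle centered at a point of $F$ whose radius is a distance between points of $F$ has an equation $x^2+y^2+\delta x+\epsilon y+\zeta=0$ with $\delta,\epsilon,\zeta\in F$ (the radius squared is already in $F$). Intersecting two such lines is solving a linear system over $F$, so the new point has coordinates in $F$. Intersecting a line with such a circle: solve the linear equation for one variable, substitute to get a quadratic over $F$ in the other, whose roots lie in $F(\sqrt{d})$ for $d\in F$ the (nonnegative) discriminant---and then so do both coordinates; if $d$ is already a square in $F$, no new field is adjoined. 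Intersecting two circles: subtracting their equations cancels the $x^2+y^2$ terms and produces a linear equation over $F$ (the radical axis), reducing this to the line-and-circle case. In every case the new point has coordinates in $F$ or in a single quadratic extension $F(\sqrt{d})$, which we append to the tower, completing the induction.

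\emph{Main obstacle.} The delicate part is not any single computation but the correct framing of the induction: one must let the tower of quadratic extensions grow \emph{with} the construction rather than arguing that a single intersection of lines and circles bounds the degree by $2$---the latter being the flaw that vitiated Descartes's impossibility arguments. Secondary care is needed for the radical-axis reduction in the circle--circle case, for the verification that the quantities under the square roots are genuinely nonnegative (automatic, being squared distances or discriminants of systems that actually have real solutions), and for the bookkeeping that identifies constructibility of a point with constructibility of each of its coordinates.
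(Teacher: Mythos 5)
Your proposal is correct and follows essentially the same route as the paper: sufficiency via Propositions~\ref{InsMean} and \ref{Find4} for products, quotients, and square roots, and necessity by reducing intersections of lines and circles to linear and quadratic equations over the field of already-constructed coordinates. The paper deliberately gives only a sketch of the inductive tower of quadratic extensions (deferring the formal induction to the cited references), so your explicit induction and the radical-axis reduction for the circle--circle case simply fill in details the paper omits by design.
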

\begin{proof}
Given the unit segment and segments of lengths $a,b$, segments with lengths $a+b$, $a-b$ can be constructed by laying them out on the same line; $a\cdot b$, $a/b$ by taking the fourth proportionals in $x:b=a:1$ and $x:a=1:b$, respectively; and $\sqrt{a}$  by inserting the mean proportional in $1:x=x:a$. By Propositions \ref{InsMean} and \ref{Find4}, all of this is constructible with straightedge and compass. Therefore, all algebraic operations and takings of square roots can be performed with them. 

For the converse, note that constructing points with straightedge and compass reduces to producing lines and circles and intersecting them with each other. The coefficients of their equations are rational functions of coordinates of already constructed points and squared distances between them (for circles' radii). Hence, the coordinates of the intersection points can be found by solving linear and/or quadratic equations with those coefficients, i.e., by applying field operations to them and square roots to their positive combinations.
\end{proof}
Since we are dealing with anglesection we must consider constructible angles in addition to lengths and points. Rather than treating them separately, we will simply identify them with constructible points on the unit circle. Indeed, given an angle, we can lay off an equal one from the horizontal axis at the origin with straightedge and compass. Intersecting its inclined side with the unit circle produces the corresponding point on it. Conversely, any angle can be produced by drawing the line through a point on the unit circle and the origin. This way, dividing angles reduces to dividing arcs of the unit circle.
\begin{definition} Let the right anglesector (RA) be the tool that divides the right angle in the same ratio as that of any two given segments, and the reverse right anglesector (RRA) be the tool that divides segments in the same ratio as a given acute angle divides the right angle. A number is called RA-constructible or RRA-constructible when its absolute value is the length of a segment that can be constructed starting from the unit segment with straightedge, compass, and the corresponding tool. The same terms are applied to points when their coordinates are constructible accordingly. 
\end{definition}
\noindent We will now extend the Descartes-Gauss theorem to incorporate these anglesector tools. As should be expected, the functions representing their use are no longer algebraic in the traditional sense of algebra. This contrasts with extensions by marked straightedge in \cite{Baragar}, and by angle trisector in \cite{Gleason}. 
\begin{theorem}\label{AnalAngsec} A real number is RA/RRA-constructible if and only if it can be obtained from $1$ by applying a finite number of algebraic operations, and takings (for previously constructed
numbers $x$) of $\sqrt{x}$ with positive $x$ and of 
\smallskip

\noindent \textup{(RA)} $\sin(\pi x)$ with real $x$; 

\noindent \textup{(RRA)} $\frac1\pi\arcsin(x)$ with real $x$, $|x|\leq1$.
\end{theorem}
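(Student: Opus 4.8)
The plan is to mirror the proof of the Descartes–Gauss theorem (Theorem \ref{DesGau}), adding two new ingredients corresponding to the two new tools, and then to argue the converse by the same "solve equations for intersection coordinates" induction, now enlarged to keep track of the analytic operations introduced by the anglesectors. I would begin with the forward direction. The geometric meaning of the right anglesector, as worked out in Section \ref{Anglesec}, is: given a segment $AB$ of length $R$ cut by $F$ in the ratio $t:(1-t)$, i.e. $|AF|=tR$, it produces the point $H$ on the arc so that $\angle DAH = t\cdot\frac{\pi}{2}$. Translating to coordinates on the unit circle, feeding in the ratio $x$ (a previously constructed number, obtainable as a length quotient) yields the point $(\cos(\frac{\pi}{2}x),\sin(\frac{\pi}{2}x))$, hence both $\cos(\frac{\pi}{2}x)$ and $\sin(\frac{\pi}{2}x)$ become constructible. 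Using angle addition (a straightedge-and-compass operation on unit-circle points) and the identity $\sin(\pi x)=2\sin(\frac{\pi}{2}x)\cos(\frac{\pi}{2}x)$, one gets $\sin(\pi x)$; conversely $\sin(\frac{\pi}{2}x)$ is recovered from $\sin(\pi x)$ by field operations and a square root, so the two generate the same field over the base. For the reverse right anglesector, the geometric content is the inverse operation: given an acute angle $\angle DAH$ — equivalently a constructible point $(c,s)$ on the unit circle with $s\geq 0$, so $s=\sin(\theta)$ — it produces the segment $AF$ whose ratio to $AB$ equals $\frac{\theta}{\pi/2}$, i.e. it produces the number $\frac{2}{\pi}\arcsin(s)$. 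Since the input $s$ is any previously constructed number with $0\le s\le 1$ (and the case $-1\le s<0$ is handled by reflecting the angle, changing a sign), and $\frac{1}{\pi}\arcsin(x)=\frac12\cdot\frac{2}{\pi}\arcsin(x)$, we see RRA exactly adjoins $\frac1\pi\arcsin(x)$ up to field operations. This establishes that everything listed is RA/RRA-constructible, and conversely that the listed operations suffice to reproduce the tools' effects.

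For the converse direction I would run the induction from the proof of Theorem \ref{DesGau} essentially verbatim, but over the larger field. One builds a tower of fields $\Q = F_0 \subseteq F_1 \subseteq \cdots$ where each $F_{n+1}$ is obtained from $F_n$ by adjoining either a square root of a positive element, or $\sin(\pi x)$ and $\cos(\pi x)$ for some $x\in F_n$, or $\frac1\pi\arcsin(x)$ (together with $\sqrt{1-x^2}$) for some $x\in F_n$ with $|x|\le 1$; a point is RA/RRA-constructible iff its coordinates lie in some $F_n$. The key observation, exactly as before, is that a new point produced by straightedge and compass is obtained by intersecting two lines, a line and a circle, or two circles, all of whose defining equations have coefficients that are rational functions of coordinates of previously constructed points and of squared radii — hence lie in the current $F_n$ — so the new coordinates are obtained by field operations and at most one square root, i.e. they land in the next layer. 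When the RA tool is invoked, the input ratio is a quotient of two already-constructed lengths, hence lies in $F_n$, and the output coordinates are $\sin(\frac\pi2 x),\cos(\frac\pi2 x)$, which by the forward-direction identities lie in the field generated over $F_n$ by $\sin(\pi x'),\cos(\pi x')$ for a suitable $x'\in F_n$; similarly for RRA. So in all cases one stays inside the tower, completing the induction.

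I expect the main technical nuisance — not a deep obstacle, but the place where care is required — to be bookkeeping the passage between "ratios of segments" and "field elements," and between "angles" and "unit-circle points," so that the analytic functions appearing are exactly $\sin(\pi x)$ and $\frac1\pi\arcsin(x)$ rather than some cousin like $\sin(\frac\pi2 x)$ or $\frac2\pi\arcsin$. This is handled, as sketched above, by the elementary identities for half-angles and by absorbing constant factors of $2$ into the ambient field operations, together with noting (using the already-proven Descartes–Gauss theorem) that quotients, square roots, and angle-addition are all available for free from straightedge and compass. A secondary point to state carefully is the domain condition $|x|\le 1$ for RRA and the sign conventions for angles in the second through fourth quadrants, which only contribute harmless sign changes and reflections, all of which are straightedge-and-compass operations. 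Once these identifications are pinned down, both directions are routine adaptations of the classical argument.
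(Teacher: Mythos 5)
Your proposal is correct and takes essentially the same route as the paper: both reduce to Theorem \ref{DesGau} and then identify the functional counterparts of the two tools, taking care of the factor of $2$ between the right angle and the $\pi$ appearing in $\sin(\pi x)$ and $\frac1\pi\arcsin(x)$. The only cosmetic difference is that the paper pre-doubles the input ratio (dividing the right angle in the ratio $2x:1$) and bisects the output segment, whereas you absorb the same factor via the double-angle and half-angle identities.
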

\begin{figure}[!ht]
\vspace{-0.1in}
\begin{centering}
\ \ \ (a)\,\includegraphics[scale=.125]{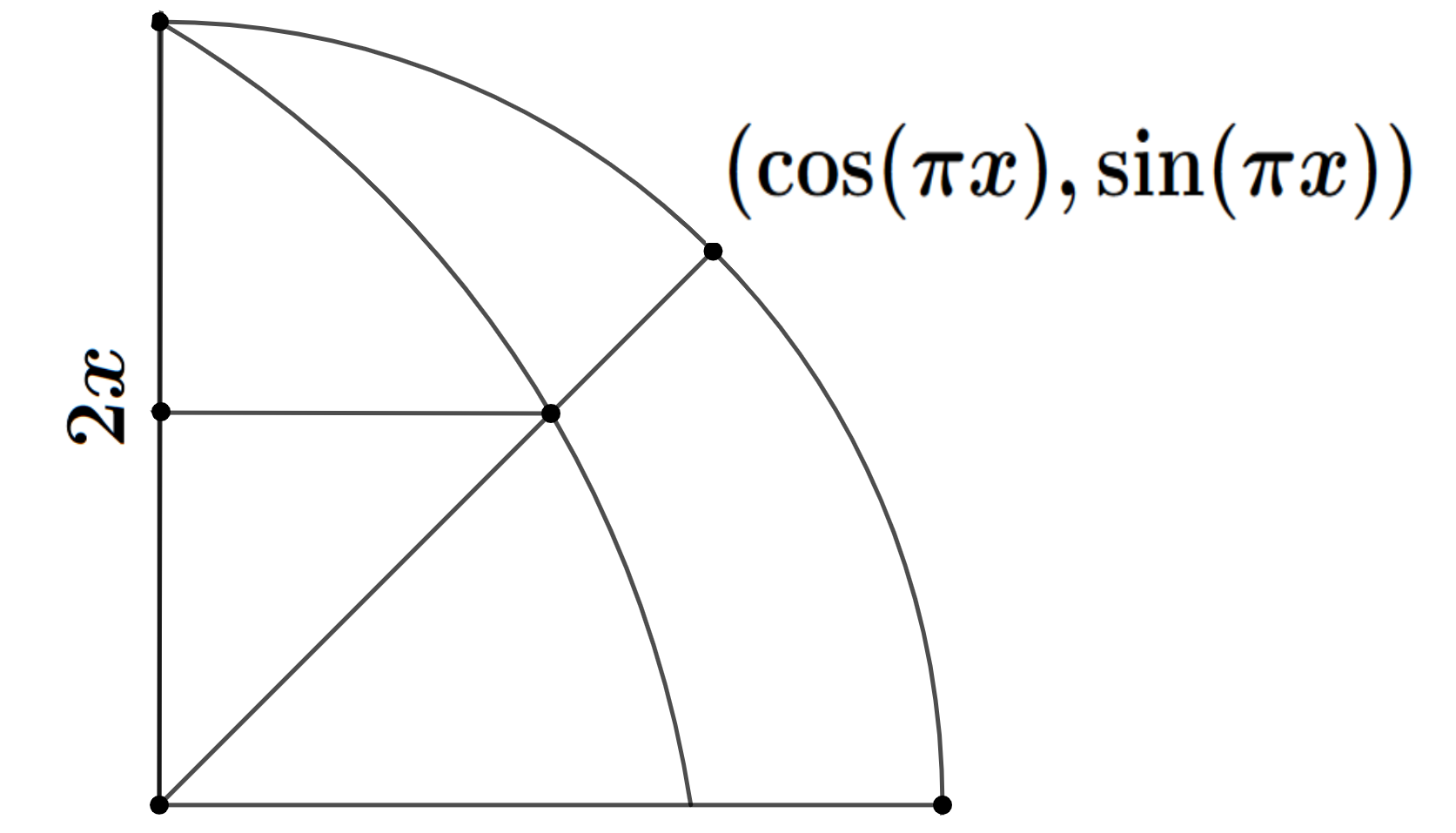}
\hspace{0em} (b)\,\includegraphics[scale=.125]{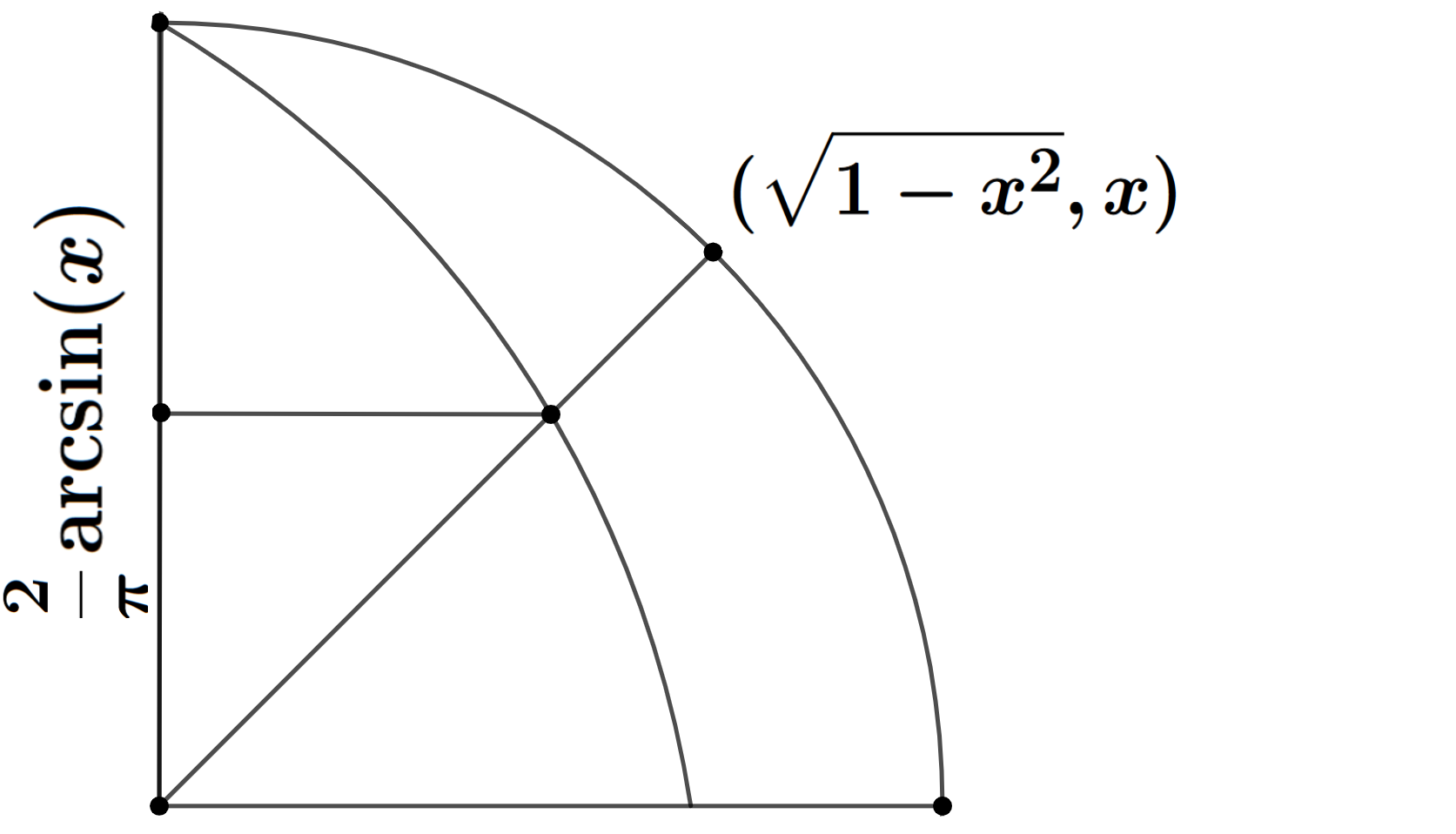}
\par\end{centering}
\vspace{-0.15in}
\hspace*{-0.1in}\caption{\label{anglesectorplain}(a) The point obtained by dividing the right angle in the ratio $2x:1$;\\ (b) the point obtained by dividing a unit segment in the same ratio that an acute angle divides the right angle.}
\end{figure}
\begin{proof} Due to Theorem \ref{DesGau}, we only need to find functional counterparts for the anglesectors.
\smallskip

\noindent \textup{(RA)} Given $x$, we can construct such $x'$ that $0\leq x'\leq\frac12$ and $\sin(\pi x)=\pm\sin(\pi x')$ with straightedge and compass. So assume that $x$ is already in that range. We then double it and divide the right angle in the ratio $2x:1$. The vertical coordinate of the corresponding point on the unit circle is then $\sin(\pi x)$, see Figure \ref{anglesectorplain}\,(a). Conversely, a point produced by dividing the right angle in a given ratio $2x:1$ has coordinates $\big(\!\cos(\pi x), \sin(\pi x)\big)$, and $\cos(\pi x)=\pm\sqrt{1-\sin^2(\pi x)}$, so its coordinates can be obtained by the listed operations. 
\smallskip

\noindent \textup{(RRA)} Given $x$, we  construct the point with coordinates $\big(\!\sqrt{1-x^2}, |x|\big)$ with straightedge and compass. The reverse anglesector then produces a vertical segment of length $\frac2\pi\arcsin|x|$, see Figure \ref{anglesectorplain}\,(b), and bisecting it produces what we want. Conversely, reverse anglesection converts a point with coordinates $\big(\!\sqrt{1-x^2}, x\big)$ for $0<x<1$ into one with coordinates $\big(0, \frac2\pi\arcsin(x)\big)$, and those can be obtained by the listed operations.
\end{proof}
Using Theorem \ref{AnalAngsec}, one can easily find the functional counterpart to the general anglesector as well. We leave it as an exercise for the reader.

\section{Anglesection and transcendental numbers}\label{Trans}

Now that we have a functional representation of constructions with anglesectors, geometric constructibility questions can be converted into parallel algebraic ones. To tackle them, it will be convenient to consider a larger class of numbers that includes constructible numbers and is easier to work with.
\begin{definition} A number is called algebraic when it is a root of a polynomial with integer coefficients, and transcendental otherwise. The set of algebraic numbers is denoted $\oQ$.
\end{definition}
It follows from the Descartes-Gauss theorem that any constructible number is algebraic. But, as Pierre Wantzel proved in 1837, the converse is false. The number $\sqrt[3]{2}$, needed to solve the cube duplication problem, is algebraic but not constructible. Even earlier, in 1824, Abel proved that the roots of $2x^5-10x+5$ and other quintic polynomials are not constructible either, and would remain so even if we allowed not just square roots but  radicals of any order.

One can call complex numbers constructible when they are represented by constructible points in the complex plane, and such numbers are still algebraic \cite[5.4]{Niven}, \cite[8.9]{Waerden}. But $e$ and $\pi$ are not. Hermite proved $e$ transcendental in 1873, and Lindemann proved $\pi$ the same in 1882. Since $\pi$ is not algebraic it is all the more not constructible. Therefore, the circle cannot be squared with straightedge and compass. Thereby, the 2,500 year old quadrature problem  was finally solved, or rather proved unsolvable.

But what if we add the anglesector tools introduced in Section \ref{Numb}? By Theorem \ref{AnalAngsec}, we can generate RA-constructible numbers recursively as follows. Start with the rational numbers $S_0:=\Q$. Then take all square roots $\sqrt{r}$ of positive rational numbers and all numbers $\sin(\pi r)$, and combine them with each other using the four field operations. We can denote the set of numbers so obtained by $\Q\big(\sqrt{\Q^+},\sin(\pi \Q)\big)$, with the parentheses standing for adjoining all algebraic combinations of the numbers listed. Then iterate the exercise. Similarly, for the RRA-constructible numbers we do the same with $\sin(\pi x)$ replaced by $\frac1\pi\arcsin(x)$ and applied only to previously generated numbers that fall on $[-1,1]$. Finally, we can combine the right anglesector and its reverse in our procedures. All in all, we get three towers of number fields.
\begin{definition}\label{SAtowers} Let $S_0:=\Q$ and recursively  $S_{n+1}:=S_n\big(\sqrt{S_n^+},\sin(\pi S_n)\big)$; $A_0:=\Q$ and  $A_{n+1}:=A_n\big(\sqrt{A_n^+},\frac1\pi\arcsin(A_n\cap[-1,1])\big)$; $\SA_0:=\Q$ and  
$$
\SA_{n+1}:=\SA_n\left(\sqrt{\SA_n^+},\sin(\pi \SA_n),\frac1\pi\arcsin(\SA_n\cap[-1,1])\right).
$$
We denote $S:=\bigcup_{n\geq0}S_n$, $A:=\bigcup_{n\geq0}A_n$, and $\SA:=\bigcup_{n\geq0}\SA_n$.
\end{definition}
\noindent By Theorem \ref{AnalAngsec}, we have that a real number $x$ is RA-constructible if and only if $x\in S$, and RRA-constructible if and only if $x\in A$.

It is easy enough to see that $\pi\not\in S_1$, i.e., the unit circle's circumference cannot be rectified with a single application of the right anglesector to rational segments. This is because $\sin(\pi r)$ is algebraic for any rational $r$  \cite{Lehmer}, \cite[5.4]{Niven}. As square roots are also algebraic, and algebraic numbers are closed under field operations (i.e., they form a field, just like $\Q$, $\R$,
and $\C$), we can conclude that $S_1\subseteq\oQ$. But we know that $\pi\not\in\oQ$ from Lindemann's result.

The reason $\sin(\pi r)$ are algebraic is that we can use trigonometric identities for multiple angles to generate a polynomial equation with integer coefficients that  they are roots of. For example, suppose $r = \frac{2}{5}$. By the identity for $\sin(5\theta)$,
$$
\sin(5\theta)=16\sin^{5}(\theta)-20\sin^{3}(\theta)+5\sin(\theta).
$$
In our case, $\theta = \frac{2\pi}{5}$, so $\sin(5\theta)=\sin(2\pi)=0$ and $x:=\sin(\theta)$ is a root of a polynomial of degree $5$, namely, $16x^{5}-20x^{3}+5x$. Analogously, as long as $\theta=\pi r$ is a rational multiple of $\pi$, we can use a multiple angle formula for $\sin(n\theta)$, with $n$ the denominator of $r$, to produce a polynomial equation that it solves. 

However, already in $S_2$ we encounter numbers like $\sin(\pi\sqrt{2})$. That they are transcendental follows from the following theorem proved independently by  Aleksandr Gelfond and Theodor Schneider in 1934 \cite{Lang71}, \cite[5.4]{Niven}.
\begin{theorem}[Gelfond, Schneider] Let $a,b$ be algebraic numbers with $a\neq0,1$ and $b$ irrational. Then $a^b$ is transcendental.
\end{theorem}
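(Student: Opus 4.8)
The plan is to run the classical transcendence machine, following the Gelfond--Schneider method: build an auxiliary exponential polynomial with Siegel's lemma, bound its growth with a Schwarz-type estimate, use the Liouville lower bound for a nonzero algebraic number, and then extrapolate to inflate the zero set of the auxiliary function until it outruns what its analytic size can support. Fix a value of $\log\alpha$, nonzero because $\alpha\neq0,1$, and write $\alpha^{z}:=e^{z\log\alpha}$. I would argue by contradiction, assuming $\gamma:=\alpha^{\beta}=e^{\beta\log\alpha}$ is algebraic, and put $K:=\Q(\alpha,\beta,\gamma)$, a number field of some degree $d$. The arithmetic fact that makes everything run is that $\alpha^{q(m+n\beta)}=\alpha^{qm}\gamma^{qn}\in K$ for integers $m,n,q$, while the irrationality of $\beta$ makes the points $m+n\beta$ pairwise distinct; together these let me impose many $K$-rational vanishing conditions on a polynomial in $z$ and $\alpha^{z}$.

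\emph{Construction.} Fix a large integer $N$ and take $L$ roughly of order $N$, with the constant to be pinned down later. By Siegel's lemma I would produce algebraic integers $c_{pq}\in\mathcal{O}_{K}$, $0\le p,q\le L$, not all zero and with all conjugates and denominators bounded by $C_{1}^{\,N}$, such that the entire function
\[
F(z)=\sum_{p=0}^{L}\sum_{q=0}^{L}c_{pq}\,z^{p}\,\alpha^{qz}
\]
vanishes at every point $z=m+n\beta$ with $0\le m,n<N$. This is solvable because each condition $F(m+n\beta)=0$ is one linear equation over $K$ in the $c_{pq}$, with coefficients $(m+n\beta)^{p}\alpha^{qm}\gamma^{qn}$ of controlled size, and the number $(L+1)^{2}$ of unknowns exceeds a fixed multiple of the number $N^{2}$ of equations.

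\emph{Extrapolation (the crux).} I would then show, by induction, that $F$ vanishes at all $m+n\beta$ with $0\le m,n<T$, for $T$ taken as large as we like. At a typical step, assume $F$ vanishes at the $T_{0}^{2}$ points with $m,n<T_{0}$, and let $T_{1}>T_{0}$ be the next threshold. On a disk $|z|\le R$ with $R$ a suitable multiple of $T_{1}$ one has the crude bound $|F(z)|\le C_{2}^{\,N}e^{C_{3}RN}$; dividing $F$ by $\prod_{m,n<T_{0}}(z-(m+n\beta))$ and applying the maximum modulus principle on a larger circle turns the known zeros into a small factor, giving $|F(m+n\beta)|\le C_{4}^{\,NT_{1}}\,2^{-cT_{0}^{2}}$ for all $m,n<T_{1}$. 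On the arithmetic side, $F(m+n\beta)\in K$; after clearing a denominator it becomes an algebraic integer all of whose conjugates are at most $C_{5}^{\,NT_{1}}$ in absolute value, so that if it is nonzero the inequality $|N_{K/\Q}(\xi)|\ge1$ for a nonzero algebraic integer $\xi$ forces $|F(m+n\beta)|\ge C_{6}^{-NT_{1}}$. If the parameters are arranged so that the Schwarz gain $2^{-cT_{0}^{2}}$ dominates the accumulating arithmetic losses $C^{NT_{1}}$ at every step, these two estimates are incompatible, so $F(m+n\beta)=0$ and the induction closes.

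\emph{Conclusion, and the hard part.} Now the fixed function $F$ is not identically zero --- the $z^{p}\alpha^{qz}$ are linearly independent over $\C$ because $\log\alpha\neq0$, so $F\equiv0$ would force all $c_{pq}=0$ --- yet it vanishes at the $T^{2}$ distinct points $m+n\beta$ with $0\le m,n<T$, all of which lie in $|z|\le(1+|\beta|)T$, for every $T$. But $F$ has order $1$ and type $O(N)$, so Jensen's formula bounds its number of zeros in $|z|\le\rho$ by $C_{7}(N)\rho$; with $N$ now fixed and $\rho=(1+|\beta|)T$ this gives $T^{2}\le C_{8}(N)\,T$ for all large $T$, which is absurd. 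Hence $\alpha^{\beta}$ cannot be algebraic. I expect the extrapolation to be the main obstacle: one must calibrate $L$ against $N$, the successive thresholds $T_{0}<T_{1}<\cdots$ and disk radii, and the rate at which $T$ is allowed to grow, so that the exponentially small Schwarz factor $2^{-cT_{0}^{2}}$ strictly beats the $C^{NT_{1}}$ factors at \emph{every} stage, all while keeping Siegel's lemma fed with more unknowns than equations and tracking the conjugate sizes and denominators that build up. The naive choices do not balance; making every exponent line up is the real substance of the theorem, and the remaining height estimates for algebraic numbers are routine.
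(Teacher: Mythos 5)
First, a point of comparison: the paper does not prove the Gelfond--Schneider theorem at all. It is quoted as a known deep result with references to Lang and Niven, and everything the paper actually needs from it (transcendence of $\sin(\pi\sqrt2)$, etc.) is derived from the statement alone. So there is no in-paper proof to measure you against; what you have written is a from-scratch attempt at a major theorem, and it has to be judged on its own. Your skeleton is the correct classical one (Schneider's derivative-free method: auxiliary polynomial in $z$ and $\alpha^z$, interpolation at the points $m+n\beta$, Siegel's lemma, Schwarz, Liouville, extrapolation, zero count), and you correctly locate where the hypotheses enter ($\beta\notin\Q$ gives distinct interpolation points; $\alpha\neq0,1$ gives $\log\alpha\neq0$ and hence $F\not\equiv0$).

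The genuine gap is exactly at the point you flag as ``the real substance,'' and it is worse than a calibration chore: with your symmetric choice $0\le p,q\le L$ and $L$ of order $N$, the first extrapolation step provably cannot close, for any choice of radius. The Schwarz gain from the $N^2$ initial zeros is $N^2\log\bigl(R/(cN)\bigr)$, while the growth term contributed by $\alpha^{Lz}$ on $|z|=R$ is $LR\,|\!\log\alpha|\sim NR\,|\!\log\alpha|$; the ratio $N\log(R/(cN))/R$ is maximized at $R\asymp N$ with value $O(1)$ independent of $N$, so the gain can never dominate the loss (and the arithmetic losses and the Siegel heights --- which, incidentally, are $C^{N^2}$ rather than your claimed $C^N$, since the system's coefficients $(m+n\beta)^p\alpha^{qm}\gamma^{qn}$ have logarithmic height of order $LN$ --- only make this worse). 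The fix is not tuning constants but a structurally asymmetric degree allocation: powers of $z$ cost only $L_0\log R$ in growth while powers of $\alpha^z$ cost $L_1R$, so one takes $L_1$ small and $L_0$ large with $(L_0+1)(L_1+1)\approx 2N^2$ (for instance $L_0\asymp N^{3/2}$, $L_1\asymp N^{1/2}$, $R\asymp N^{3/2}$), after which the gain $N^2\log(R/N)\asymp N^2\log N$ beats every loss term and the induction, and then your Jensen/zero-counting endgame, goes through. Without that asymmetry the argument as written fails at its central step.
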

\noindent Here $a^b:=e^{b\ln a}$ with some choice of value for $\ln a$, the theorem holds for any such choice. In particular, taking $a=-1$, $\ln(-1)=i\pi$, and $b=\sqrt{2}$ we conclude that $(-1)^{\sqrt{2}}$ is transcendental. By Euler's formula \eqref{exp-1}, 
$$
(-1)^{\sqrt{2}}=e^{i\pi\sqrt{2}}=\cos(\pi\sqrt{2})+i\sin(\pi\sqrt{2}),
$$
and so $\sin(\pi\sqrt{2})$ must be transcendental as well. If it were algebraic then $\cos(\pi\sqrt{2})$ would also be algebraic, and hence so would be $(-1)^{\sqrt{2}}$, a contradiction. Thus, Lindemann's theorem does not tell us that $\pi\not\in S_2$.

It gets even worse for the reverse anglesector. Consider $\frac1\pi\arcsin(r)$ with a rational $r$. If it is algebraic then $r=\sin(\pi b)$ for some algebraic $b$. But we know from the Gelfond-Schneider theorem that for $b$ irrational $\sin(\pi b)$ is not even algebraic, let alone rational. And if $b$ is rational, Olmsted showed in 1945 with just elementary trigonometry that a rational $r=\sin(\pi b)$ can only be $r=0$, $\pm\frac12$ or $\pm1$ \cite[App.\,D]{Niven}, \cite{Olm}. For all other rational $r$, the number $\frac1\pi\arcsin(r)$ must be transcendental. So Lindemann's theorem does not even tell us that $\pi\not\in A_1$.

\section{Schanuel conjecture}\label{Schan}

To make headway, we need to distinguish $\pi$ not just from algebraic numbers, but from other transcendental numbers as well. The following definition introduces a concept suitable for that.
\begin{definition} Complex numbers $z_1,\dots,z_n$ are called algebraically independent over $\mathbb{Q}$ when there is no polynomial in $n$ variables with rational coefficients, not all of them $0$, such that $p\,(z_1,\dots,z_n)=0$. They are called linearly independent over $\mathbb{Q}$ when there is no such linear polynomial.
\end{definition}
\noindent Note that for a single number being ``algebraically independent" just amounts to being transcendental. Indeed, we can turn any polynomial with rational coefficients into a polynomial with integer coefficients having the same roots by simply multiplying all the coefficients by their least common denominator. 

In these terms, what we need is to prove algebraic independence of $\pi$ from $\sin(\pi\sqrt{2})$, and all other numbers added in the construction of $\SA$. Alas, the modern theory of transcendental numbers is still very far from proving such strong claims. However, as reported by Lang in 1966, his former PhD student  Stephen Schanuel made a conjecture that turned out to imply many of them \cite{Cheng,Chow,Lang71,Terzo}.
\begin{conjecture*}[Schanuel] If $z_1,\dots,z_n$ are complex numbers linearly independent over $\mathbb{Q}$ then among the numbers $z_1,\dots,z_n, e^{z_1},\dots,e^{z_n}$ at least half are algebraically independent over $\mathbb{Q}$.
\end{conjecture*}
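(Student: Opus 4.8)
A genuine proof is out of reach: the Schanuel conjecture is the reigning open problem of transcendental number theory, so the most one can responsibly propose is a route through the theorems that \emph{are} known as special cases, with an honest account of where each avenue stalls \cite{Lang71,Terzo}. First I would isolate the two extremes that have been settled. If $z_1,\dots,z_n$ are algebraic and $\Q$-linearly independent, the Lindemann--Weierstrass theorem makes $e^{z_1},\dots,e^{z_n}$ algebraically independent, so at least $n$ of the $2n$ numbers are, and Schanuel holds in that case. Dually, if the $e^{z_i}$ are algebraic, so that the $z_i$ are logarithms of algebraic numbers, Baker's theorem on linear forms in logarithms gives $\oQ$-linear independence of the $z_i$; but linear independence is far weaker than algebraic independence, so this does \emph{not} close the case, and ``algebraic independence of logarithms of algebraic numbers'' is itself a notorious open subproblem. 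So the plan of interpolating between the two extremes breaks down at once.

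The second avenue is the function-field analogue, which \emph{is} an actual theorem: Ax's 1971 ``Ax--Schanuel'' result. In a differential setting --- say $f_1,\dots,f_n$ are formal power series with no constant term, or elements of a differential field --- if the $f_i$ are $\Q$-linearly independent modulo constants, then $f_1,\dots,f_n,e^{f_1},\dots,e^{f_n}$ have transcendence degree at least $n$ over the field of constants. The proof is genuinely algebraic: differentiation carries $e^{f_i}$ to $f_i'\,e^{f_i}$, binding the exponentials to the $f_i$ through the derivation, and a Jacobian / K\"ahler-differential computation closes it. The plan would then be to transplant this argument to $(\C,\exp)$ with the ordinary exponential. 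The obstruction --- and this is the single hardest point --- is that $\C$ carries no derivation for which $\exp$ satisfies an algebraic differential equation over $\Q$, so Ax's mechanism has no numerical counterpart.

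The third avenue is Zilber's model-theoretic one \cite{Zil05,Kir10}. One builds an abstract pseudo-exponential field $B_{\exp}$: an algebraically closed field of characteristic $0$ and cardinality $2^{\aleph_0}$, equipped with a surjective homomorphism from its additive group to its multiplicative group with kernel $2\pi i\,\Z$, which satisfies the Schanuel inequality by axiom together with a strong existential-closedness property. Zilber showed such a structure exists and is unique (categorical in every uncountable cardinal), and later work has verified several of the closedness properties for $(\C,\exp)$ itself; Schanuel would then follow from $(\C,\exp)\cong B_{\exp}$. The difficulty is that proving that isomorphism currently presupposes Schanuel, so this is a reformulation rather than a proof.

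The blunt summary of the main obstacle: every unconditional transcendence technique we possess --- auxiliary functions, zero and multiplicity estimates, the Gelfond--Baker--Nesterenko circle of ideas --- produces only a bounded number of algebraically independent values at a time (the record being Nesterenko's theorem that $\pi$, $e^{\pi}$ and $\Gamma(1/4)$ are algebraically independent), never the ``$n$ out of $2n$'' growth demanded here for general $n$. Accordingly I would not expect to complete a proof of the statement itself; the productive move, and the one the remaining sections take, is the reverse --- \emph{assume} Schanuel and extract consequences, in particular that $\pi$ is not algebraically based (Conjecture \ref{ConEL}) via the stronger Theorem \ref{pinEL}.
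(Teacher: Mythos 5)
You are right that no proof can be given: the paper states Schanuel's conjecture exactly as what it is, an open conjecture assumed without proof, and everything downstream (Theorem \ref{pinEL}, Conjecture \ref{ConEL}) is derived conditionally on it. Your survey of the known special cases and of why each known technique stalls is accurate and matches the paper's stance, so there is nothing to correct.
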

\noindent The Schanuel conjecture is very strong. Many known difficult results and open conjectures follow from it. For example, it trivially implies Lindemann's theorem that $\pi$ is transcendental. Indeed, among $i\pi$ and $e^{i\pi}=-1$ there must be at least one algebraically independent (transcendental) number, so $i\pi$ is it. Since $i$ is algebraic $\pi$ must be transcendental. Moreover, $\pi$ and $e$ would have to be not only transcendental, but also algebraically independent of each other, which is still an open problem. This is because among $1,i\pi,e^1=e,e^{i\pi}=-1$ there must be at least two algebraically independent ones, but  $1$ and $-1$, being rational, cannot be that. 

The Gelfond-Schneider theorem also easily follows from the Schanuel conjecture. Indeed, consider numbers $\ln a$, $b\ln a$, where $a,b$ are both algebraic, $a\neq0,1$ and $b$ is irrational. Since $a\neq1$ and $b\neq0$ our numbers are not $0$, and since $b$ is irrational they are linearly independent over $\Q$. By the Schanuel conjecture, there must be at least two algebraically independent numbers among $\ln a,b\ln a,e^{\ln a}=a,e^{b\ln a}=a^b$. But $a$ is algebraic by assumption, and $b\ln a$ is an algebraic multiple of $\ln a$, so it is not independent. Thus, the second algebraically independent number can only be $a^b$. In particular, it must be transcendental. More non-trivial consequences of the Schanuel conjecture are derived in \cite{Cheng,Chow,Terzo}.

\section{Algebraically based numbers}\label{ExpLog}

To apply the Schanuel conjecture to our case, it is convenient to extend the tower $\SA$ to a field constructible in terms of exponentials and logarithms. Due to Euler's formula \eqref{exp-1}, we can express $\sin(\pi x)$ algebraically in terms of $(-1)^x$, and $\frac1\pi\arcsin(x)$ in terms of $\log_{\,-1} x$. So all numbers in $\SA$, and then some, can be generated with $(-1)^x$ and $\log_{\,-1} x$. Before, we could not generate $e^\pi$ from the rational numbers because only real $x$ were allowed in $\sin(\pi x)$, but $e^\pi=(-1)^{-i}$, so now we can. There is a technical issue with logarithms of complex numbers---they are multivalued. We can take their principal values, and often we will, but sometimes algebra works out better if we adjoin all their multiple values. This will be indicated by capitalized $\Ln$ or $\Log$.

While we are at it, instead of taking only square roots, we will allow taking radicals of any order, and even roots of polynomial equations with previously generated coefficients that are not solvable in radicals. When applied to a field $F$, the result is called its algebraic closure and denoted $\overline{F}$. We already used this notation for algebraic numbers $\overline{\Q}$. Now the exponential-logarithmic extension of $\SA$ can be defined by analogy to Definition \ref{SAtowers}. This takes us into exponential algebra. We will give a general definition for an arbitrary base $b$ rather than just $b=-1$ because it turns out that we get the same extension for any algebraic $b\neq0,1$.
\begin{definition}\label{ELtowers} Let $b\neq0,1$ be a complex number. Define $\EL_0\!:=\,\Q$, and then recursively $\EL_{n}\!:=\overline{\EL_{n-1}\left(b^{\EL_{n-1}},\Log_{\,b}(\EL_{n-1})\right)}$. We denote $\EL:=\bigcup_{n\geq0}\EL_n$. Equivalently, $\EL$ is the smallest algebraically closed subfield $F\subseteq\C$ such that $x\in F$\, if and only if\, $b^x\in F$. We will write $\EL^{(b)}$ to indicate the base $b$ explicitly.
\end{definition}
\noindent Algebraic numbers are in $\EL^{(b)}$ for any $b$, in fact, $\oQ\subseteq\EL^{(b)}_1$. The tower $\EL^{(e)}$ was introduced by Ritt to formalize the concept of ``elementary numbers" by analogy to the elementary functions of Liouville \cite{Ritt}. A somewhat smaller tower, where one adjoins radicals of all orders at each step rather than takes full algebraic closures, is considered in \cite{Chow}. Its elements are called ``closed-form numbers". Clearly, $e$ is ``elementary," and so is $\pi$ since $\ln(-1)=i\pi$. The next proposition collects some simple properties of $\EL^{(b)}$.
\begin{proposition}\label{ELprop}$\phantom{a}$ Let $x,y,a,b,c\in\C\backslash\{0,1\}$.
\smallskip

\noindent \textup{(i)} If $x,y\in\EL^{(b)}$ then $x^y$ and $\Log_xy\in\EL^{(b)}$;
\smallskip

\noindent \textup{(ii)} If $a\in\EL^{(b)}$ then $\EL^{(a)}\subseteq\EL^{(b)}$;
\smallskip

\noindent \textup{(iii)} $\ds{\EL^{(b)}=\bigcap_{\substack{c\in\C\backslash\{0,1\},\\b\in\EL^{(c)}}}\!\!\!\!\!\!\EL^{(c)}}$;
\smallskip

\noindent \textup{(iv)} Either $\EL^{(c)}\subseteq\EL^{(b)}$ or $\log_cb\not\in\EL^{(b)}$. Moreover, if $\log_cb\not\in\EL^{(b)}$ then at most one of $a$ and $\log_ca$ is in $\EL^{(b)}$ for any $a\neq0,1$. 
\end{proposition}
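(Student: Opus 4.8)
The plan is to derive all four parts from two structural facts recorded in Definition~\ref{ELtowers}: that $\EL^{(b)}$ is closed under the maps $z\mapsto b^z$ and $z\mapsto\Log_b z$, and that it is the \emph{smallest} algebraically closed subfield $F\subseteq\C$ for which $z\in F\iff b^z\in F$. The engine is a single base-change lemma that I would establish first: if $x\in\EL^{(b)}$ and $x\neq1$, then (for any fixed branch) $\log_b x\in\EL^{(b)}$ and $\log_b x\neq0$, so $\log_b x$ is a unit of the field. Indeed $\Log_b x\subseteq\EL^{(b)}$ by the $\Log_b$-closure, and $x\neq1$ makes $\log_b x$ nonzero. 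With this in hand, mixed-base operations collapse into base-$b$ operations plus field operations through the identities $x^y=b^{\,y\log_b x}$, $\ \Log_x y=\Log_b y/\log_b x$, $\ \log_c b=1/\log_b c$, and $\log_c a=\log_b a\cdot\log_c b$, each of which is routine once compatible branches of the logarithm are fixed.

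Given that lemma I would handle the parts in order. For (i): $y\log_b x\in\EL^{(b)}$, so $x^y=b^{\,y\log_b x}\in\EL^{(b)}$ by closure under $b^{(\cdot)}$; and $\Log_b y\subseteq\EL^{(b)}$ divided by the unit $\log_b x$ gives $\Log_x y\subseteq\EL^{(b)}$. For (ii): assuming $a\in\EL^{(b)}$, I would check that $\EL^{(b)}$ is itself an algebraically closed subfield satisfying $z\in\EL^{(b)}\iff a^z\in\EL^{(b)}$ — the forward direction is $a^z=b^{\,z\log_b a}\in\EL^{(b)}$, using $\log_b a\in\EL^{(b)}$; the backward direction is that $a^z\in\EL^{(b)}$ forces $z\log_b a\in\Log_b(a^z)\subseteq\EL^{(b)}$, and dividing by the unit $\log_b a$ gives $z\in\EL^{(b)}$ — and then conclude $\EL^{(a)}\subseteq\EL^{(b)}$ by the minimality characterization of $\EL^{(a)}$.

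Parts (iii) and (iv) then reduce to (ii). For (iii), the inclusion ``$\subseteq$'' is (ii) applied to each $c$ with $b\in\EL^{(c)}$, while ``$\supseteq$'' holds because $b=b^1\in\EL^{(b)}$, so $c=b$ is one of the indices over which the intersection runs and the intersection is therefore contained in $\EL^{(b)}$. For the first assertion of (iv) I would prove the implication $\log_c b\in\EL^{(b)}\Rightarrow\EL^{(c)}\subseteq\EL^{(b)}$: from $\log_c b\in\EL^{(b)}$ (a unit since $b\neq1$) we get $\log_b c=1/\log_c b\in\EL^{(b)}$, hence $c=b^{\log_b c}\in\EL^{(b)}$, and (ii) finishes it; the stated dichotomy is just the contrapositive. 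For the ``moreover'' part, suppose $\log_c b\notin\EL^{(b)}$ but both $a$ and $\log_c a$ lie in $\EL^{(b)}$; from $a\in\EL^{(b)}$ the lemma gives $\log_b a\in\EL^{(b)}$, a unit since $a\neq1$, whence $\log_c b=\log_c a/\log_b a\in\EL^{(b)}$, a contradiction; so at most one of $a,\log_c a$ can be in $\EL^{(b)}$.

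The conceptual content is thin: only the appeal to the minimality characterization in (ii) is more than formal bookkeeping. The one place demanding care is the multivaluedness of the complex logarithm — each displayed identity must be read either as an equality of sets of values (capital $\Log$) or consistently for a fixed principal branch ($\log$), and one must confirm that closure of $\EL^{(b)}$ under the \emph{multivalued} $\Log_b$ as in Definition~\ref{ELtowers} actually delivers \emph{every} branch of the quantities used above (in particular every value of $\Log_b y$ and of $\log_b a$, together with $\Log 1=2\pi i\Z$ divided by $\log b$). I expect this branch bookkeeping, rather than any genuine mathematical difficulty, to be the main thing to get right.
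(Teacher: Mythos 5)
Your proposal is correct and follows essentially the same route as the paper's proof: part (i) via $x^y=b^{\,y\log_b x}$ and the change-of-base formula, part (ii) via the minimality characterization of $\EL^{(a)}$, part (iii) as a formal consequence of (ii), and part (iv) by showing $\log_cb\in\EL^{(b)}$ is equivalent to $\EL^{(c)}\subseteq\EL^{(b)}$. The only differences are cosmetic — you isolate the ``$\log_bx$ is a nonzero element of $\EL^{(b)}$'' observation as a preliminary lemma and reach the contradiction in the ``moreover'' clause via $\log_cb=\log_ca/\log_ba$ rather than via $c=a^{1/\log_ca}\in\EL^{(b)}$ — and your attention to the multivalued-branch bookkeeping is sound.
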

\begin{proof}$\phantom{a}$ 

\noindent \textup{(i)} By assumption and the definition of $\EL^{(b)}$, we have $\log_b x\in\EL^{(b)}$ and $y\log_b x\in\EL^{(b)}$ since it is a field. Therefore, $x^y=b^{\,y\log_b x}\in\EL^{(b)}$. Moreover, by the change of base formula and closure under division, $\Log_xy=\frac{\Log_by}{\Log_bx}\in\EL^{(b)}$.
\smallskip

\noindent \textup{(ii)} By (i), if $x\in\EL^{(b)}$ then $a^x\in\EL^{(b)}$ and if $a^x\in\EL^{(b)}$ then $x\in\Log_aa^x\subseteq\EL^{(b)}$. Since $\EL^{(a)}$ is the smallest algebraically closed subfield satisfying this biconditional it must be contained in $\EL^{(b)}$.
\smallskip

\noindent \textup{(iii)} Clearly, $\ds{\!\!\!\!\!\!\bigcap_{\substack{c\in\C\backslash\{0,1\},\\b\in\EL^{(c)}}}\!\!\!\!\!\!\EL^{(c)}\subseteq\EL^{(b)}}$ since $b\in\EL^{(b)}$. But by (ii), $\ds{\EL^{(b)}\subseteq\!\!\!\!\!\!\bigcap_{\substack{c\in\C\backslash\{0,1\},\\b\in\EL^{(c)}}}\!\!\!\!\EL^{(c)}}$.

\noindent \textup{(iv)} We will show that $\EL^{(c)}\subseteq\EL^{(b)}$ is equivalent to $\log_cb\in\EL^{(b)}$, which implies the alternative. If $\log_cb\in\EL^{(b)}$ then $c=b^{\frac1{\log_c b}}\in\EL^{(b)}$ and $\EL^{(c)}\subseteq\EL^{(b)}$ by (ii). Conversely, if $\EL^{(c)}\subseteq\EL^{(b)}$ then $b,c\in\EL^{(b)}$ and $\log_cb\in\EL^{(b)}$ by (i). Similarly, if $a,\log_ca\in\EL^{(b)}$ then $c=a^{\frac1{\log_c a}}\in\EL^{(b)}$ by (i) and $\EL^{(c)}\subseteq\EL^{(b)}$ by (ii), which is excluded by the second alternative.
\end{proof}
\noindent Since $\oQ\subseteq\EL^{(b)}$ for any $b\neq0,1$ it follows from Proposition \ref{ELprop}\,(ii) that all $\EL^{(b)}$ for any algebraic $b\neq0,1$ are the same, as we mentioned above. 
\begin{definition}\label{AlgBased} The common field $\EL^{(b)}$ for all $b\in\oQ\backslash\{0,1\}$ will be denoted $\EL^{alg}$ and its elements called algebraically based numbers.
\end{definition}
\noindent Algebraically based numbers are a very natural generalization of algebraic numbers. By Proposition \ref{ELprop}\,(i), they are exactly the numbers that can be obtained from the rational numbers by taking algebraic closures, and exponents and logarithms of previously generated numbers. Moreover, they are contained in $\EL^{(b)}$ for any non-algebraic $b$. In this sense, algebraically based transcendental numbers are the `most algebraic' of transcendental numbers. 

Algebraically based numbers also form a subset of Ritt's ``elementary" numbers \cite{Chow,Ritt}. Since $\SA\subseteq\EL^{(-1)}=\EL^{alg}$, if we can show that $\pi$ is not algebraically based then $\pi\not\in\SA$ and the circle cannot be squared with straightedge, compass and anglesectors. The next theorem raises the stakes beyond this geometric application.
\begin{theorem}\label{PiAlt} Either $\pi$ is not algebraically based or all Ritt's ``elementary" numbers $\EL^{(e)}$ are algebraically based. In the former case, at most one of $a,\ln a$ is algebraically based for all $a\neq0,1$.
\end{theorem}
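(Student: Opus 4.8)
The plan is to obtain the theorem as an almost immediate consequence of Proposition~\ref{ELprop}(iv) for a suitable choice of the bases $b,c$. First I would record two preliminary observations. One: $\EL^{alg}=\EL^{(-1)}$ by Definition~\ref{AlgBased}, since $-1\in\oQ\backslash\{0,1\}$. Two: $\pi$ is algebraically based if and only if $i\pi\in\EL^{alg}$. Indeed, $i$ is algebraic, so $i\in\oQ\subseteq\EL^{(-1)}=\EL^{alg}$, and since $\EL^{alg}$ is a field (in particular closed under multiplication and division), $\pi\in\EL^{alg}\iff i\pi\in\EL^{alg}$. Finally, $i\pi$ is precisely the principal value $\log_e(-1)=\ln(-1)$, so ``$\pi$ is algebraically based'' is synonymous with ``$\log_e(-1)\in\EL^{(-1)}$''.

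Next I would apply Proposition~\ref{ELprop}(iv) with $c=e$ and $b=-1$. Its proof in fact establishes the equivalence $\EL^{(c)}\subseteq\EL^{(b)}\iff\log_c b\in\EL^{(b)}$; specialized, $\EL^{(e)}\subseteq\EL^{(-1)}$ holds if and only if $\ln(-1)\in\EL^{(-1)}$. Passing through the two preliminary observations, this says: all of $\EL^{(e)}$---Ritt's ``elementary'' numbers---are algebraically based if and only if $\pi$ is algebraically based. In particular, either $\pi$ is not algebraically based or $\EL^{(e)}\subseteq\EL^{alg}$, which is the first assertion. (The alternative is genuine and non-vacuous: $i\pi=\ln(-1)\in\EL^{(e)}$, hence $\pi\in\EL^{(e)}$, so if $\pi$ were algebraically based it would be the inclusion $\EL^{(e)}\subseteq\EL^{alg}$ that actually occurs.)

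For the final clause I would invoke the ``moreover'' part of Proposition~\ref{ELprop}(iv), once more with $c=e$ and $b=-1$. In the ``former case'' $\pi$ is not algebraically based, i.e., $\log_e(-1)=i\pi\notin\EL^{(-1)}=\EL^{alg}$; the proposition then yields that for every $a\neq0,1$ at most one of $a$ and $\log_e a=\ln a$ lies in $\EL^{alg}$, which is exactly the claim that at most one of $a,\ln a$ is algebraically based. As a bonus, taking $a=e$ and using $\ln e=1\in\EL^{alg}$ shows, in that case, that $e$ is not algebraically based---the conditional sharpening of Hermite's theorem mentioned in the introduction.

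I do not expect a genuine obstacle here: the substance of the argument is entirely contained in Proposition~\ref{ELprop}(iv), which has already been proved. What needs care is only bookkeeping: verifying that the complex logarithm is handled consistently (Proposition~\ref{ELprop}(iv) is stated with the principal branch, and the value $\log_e(-1)=i\pi$ we use is indeed the principal one), and reading the inclusion $\EL^{(e)}\subseteq\EL^{alg}$ in the direction for which it correctly translates as ``every elementary number is algebraically based.'' Were one worried that Proposition~\ref{ELprop}(iv) might be too weak, the fallback would be to rerun its short proof with $c=e$, $b=-1$ inline, but this does not seem necessary.
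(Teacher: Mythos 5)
Your proof is correct and is essentially the paper's argument: both reduce the theorem to Proposition~\ref{ELprop}, and both obtain the final clause from the second alternative of part (iv) with $c=e$ and an algebraic base. The only (immaterial) difference is in the first dichotomy, where the paper detours through $\EL^{(e)}=\EL^{(\pi)}$ using part (ii), while you apply the disjunction of part (iv) directly with $b=-1$, $c=e$; both are one-line consequences of the same proposition.
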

\begin{proof} Let us show that $\EL^{(e)}=\EL^{(\pi)}$, i.e., one can use $\pi$ instead of $e$ to generate ``elementary" numbers. We already know that $\pi=\frac1i\ln(-1)\in\EL^{(e)}$, but also $e=(-1)^{\frac1{i\pi}}\in\EL^{(\pi)}$, so the conclusion follows by Proposition \ref{ELprop}\,(ii). Therefore, if $\pi\in\EL^{alg}$ then $\EL^{(e)}=\EL^{(\pi)}\subseteq\EL^{alg}$. The last claim follows from the second alternative in Proposition \ref{ELprop}\,(iv) with $c=e$.
\end{proof}
\noindent In other words, if $\pi$ is algebraically based then the distinction between algebraically based and ``elementary" numbers will collapse, while if it is not we will have a great deal of non-algebraically based ``elementary" numbers, namely, the natural logarithms of all algebraically based ``elementary" numbers. Ritt conjectured back in 1948 that the real root $\rho$ of $e^x+x=0$ is not ``elementary" \cite{Ritt}. Since $e^\rho=-\rho$ we have $e=(-\rho)^{\frac1{\rho}}\in\EL^{(\rho)}$, so $\EL^{(e)}\subseteq\EL^{(\rho)}$. Therefore, if Ritt's conjecture is false then $\EL^{(e)}=\EL^{(\rho)}$, and yet another distinction will collapse. 

In 1983, Ferng Lin proved that $\rho\notin\EL^{(e)}$ and $\EL^{(e)}$ is a proper subset of $\EL^{(\rho)}$, but only conditionally on the Schanuel conjecture \cite{Lin}. Even earlier, in 1966, Serge Lang conjectured that $\pi$ is not in the purely exponential tower with base $e$, but again, was only able to prove it conditionally on the Schanuel conjecture \cite{Lang71}. We wish to show, similarly, that $\pi\notin\EL^{alg}$, and will also use the Schanuel conjecture to do it. Thus, under the Schanuel conjecture, there is a hierarchy of transcendental numbers with strict inclusions, $\EL^{alg}\subsetneqq\EL^{(e)}\subsetneqq\EL^{(\rho)}$, while without it they may all be the same field.

\section{No ladder to $\boldsymbol{\pi}$}\label{piLadder}

In this section, we will show first that the Schanuel conjecture implies that $\ln b\not\in\EL^{(b)}=\EL^{alg}$ for any algebraic $b\neq0,1$. Applying it to $b=-1$ entails that $\pi$ is not algebraically based, and hence Conjecture \ref{ConEL}, since $\ln(-1)=i\pi$ and $i$ is algebraic.

Let us fix $b$ for the duration and drop it from our notation. We will first show that any element of $\EL$ can be generated by iteratively adjoining finitely many exponentials and logarithms to $\Q$ (descent). This adjoining can be trimmed to fit into the template of the Schanuel conjecture (reduction). Its application (ascent) will then show that $\ln b$ remains independent of all elements in $\EL$, and hence does not belong to $\EL$. This is enough for our purposes, but an even stronger claim will be a simple consequence.

We will need the following elementary property of algebraic closures. Given a field $F$ and a set of adjoined elements $A$, we have $\overline{\overline{F}(A)}=\overline{F(A)}$. This is because both sides are the smallest algebraically closed fields that contain both $F$ and $A$. This allows us to simplify the definition of $\EL_n$ somewhat. Let $X_i:=b^{\EL_{i}}\cup\Log_{\,b}(\EL_{i})$. Then $\EL_1=\overline{\EL_0\left(X_0\right)}=\overline{\Q\left(X_0\right)}$, and, by induction,
\begin{align}\label{ELnQ}
\EL_{n}&=\overline{\EL_{n-1}(X_{n-1})}=\overline{\overline{\Q(X_{n-2})}\,(X_{n-1})}=\overline{\Q(X_{n-2})\,(X_{n-1})}=\overline{\Q(X_{n-1})}\notag\\
&=\overline{\Q\left(b^{\EL_{n-1}},\Log_{\,b}(\EL_{n-1})\right)}
\end{align}
since $X_{n-2}\subseteq X_{n-1}$. In other words, $\EL_n$ can be generated by adjoining new elements directly to $\Q$ rather than to $\EL_{n-1}$.

To track the adjoining of elements, a construction due to Lin \cite{Lin} is convenient, which we will call a ``ladder." He called it ``graded sequence" and it is analogous to the ``tower" in \cite{Chow}.
\begin{definition}[\textbf{Ladder}]\label{ladder} Let $a_1,\dots,a_m\in\C$. Define $F_0:=\Q$ and for $k\geq1$ 
$$
F_k:=\Q\left(a_1,\dots,a_k,b^{a_1},\dots,b^{a_k}\right).
$$
We call $a_1,\dots,a_m$ a ladder when for every $k\geq1$ either $a_k\in\ov{F_{k-1}}$ or $b^{a_k}\in\ov{F_{k-1}}$. We say that it is a ladder to $\xi\in\EL$, or that $\xi$ is generated by it, when $\xi\in\ov{F_m}$.
\end{definition}
\noindent The idea is that each adjoined number is either itself algebraic over the preceding field or is the logarithm of one. Thereby, we can generate iterated logarithms of the initial numbers. Since the exponentials are explicitly adjoined, we can generate any mixed iterations of them and logarithms. And since algebraic closures are taken at each step we can generate their iterated algebraic combinations as well. Intuitively, we can build a ladder to any element of $\EL$. This intuition is formalized in the next lemma.
\begin{lemma}[\textbf{Descent}]\label{descent} For any $\xi\in\EL$ there is a ladder to $\xi$.
\end{lemma}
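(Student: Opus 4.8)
The plan is to argue by induction on the least $n$ for which $\xi\in\EL_n$. The base case is immediate: if $\xi\in\EL_0=\Q$ (indeed, already if $\xi\in\oQ$), the empty sequence $m=0$ is a ladder to $\xi$, since $F_0=\Q$ and $\ov{F_0}=\oQ\ni\xi$. So from now on I would assume every element of $\EL_{n-1}$ is generated by some ladder and take $\xi\in\EL_n$.

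By the simplified description \eqref{ELnQ}, $\xi$ is algebraic over $\Q\big(b^{\EL_{n-1}},\Log_{\,b}(\EL_{n-1})\big)$, and because the finitely many coefficients of a vanishing polynomial for $\xi$ involve only finitely many of the generators, $\xi$ is in fact algebraic over $\Q\big(b^{c_1},\dots,b^{c_r},\ell_1,\dots,\ell_s\big)$ for suitable $c_1,\dots,c_r\in\EL_{n-1}$ and logarithm values $\ell_1,\dots,\ell_s$ with $\ell_j\in\Log_{\,b}(d_j)$ and $d_j:=b^{\ell_j}\in\EL_{n-1}$. This ``finitely many generators suffice'' reduction is routine.

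Next I would assemble a ladder to $\xi$ in three passes. First, invoke the induction hypothesis to get a ladder to each of $c_1,\dots,c_r,d_1,\dots,d_s$ and concatenate them into one sequence; since algebraic closure is monotone, a ladder condition that was checked over a field built from $\Q$ remains valid after that field is enlarged by the earlier steps, so the concatenation is again a ladder, with final field $F_p$ and with every $c_i$ and every $d_j$ in $\ov{F_p}$. Second, append the steps $c_1,\dots,c_r$ one at a time; each is a legal step because $c_i$ lies in the algebraic closure of the preceding field, and appending it forces $b^{c_i}$ into the generated field. Third, append the steps $\ell_1,\dots,\ell_s$; each is legal because $b^{\ell_j}=d_j$ already lies in $\ov{F_p}$, and appending it adjoins $\ell_j$. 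If $a_1,\dots,a_m$ denotes the resulting sequence, then its final field $F_m$ contains $b^{c_1},\dots,b^{c_r},\ell_1,\dots,\ell_s$, hence all of $\Q\big(b^{c_1},\dots,b^{c_r},\ell_1,\dots,\ell_s\big)$, over which $\xi$ is algebraic; therefore $\xi\in\ov{F_m}$, i.e.\ $a_1,\dots,a_m$ is a ladder to $\xi$, which completes the induction.

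I expect the main obstacle to be purely organizational rather than conceptual: keeping the bookkeeping of the concatenation honest, so that every ladder condition verified ``locally'' over a field generated from $\Q$ genuinely survives once all the later steps are attached (this is where monotonicity of algebraic closure is used repeatedly), and handling the multivalued logarithm correctly, namely ensuring that a chosen value $\ell_j\in\Log_{\,b}(d_j)$ really does satisfy $b^{\ell_j}=d_j$ so that appending it counts as a step in the sense of Definition~\ref{ladder}. A subtle point worth flagging in the write-up is that one cannot assume $b^{c_i}\in\ov{F_p}$ for free — e.g.\ if $c_i$ is a square root of an already-adjoined logarithm — which is exactly why the second pass (explicitly appending the $c_i$'s) is necessary.
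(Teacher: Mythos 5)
Your proposal is correct and follows essentially the same route as the paper: use the simplified description \eqref{ELnQ} to express $\xi$ as algebraic over a field generated by finitely many exponentials $b^{c_i}$ and logarithms $\ell_j$ of elements of $\EL_{n-1}$, then recurse/induct down to $\Q$ and collect all the intermediate elements into a single ladder. Your write-up is merely more explicit about the induction on $n$, the concatenation bookkeeping, and the need to append the $c_i$'s as separate rungs, all of which the paper compresses into ``apply the same procedure to each $z_j$ and $b^{w_j}$ and collect all $z$-s and $w$-s.''
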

\begin{proof} By \eqref{ELnQ}, for any $\xi\in\EL_n$ there is a polynomial with coefficients in $\Q\left(b^{\EL_{n-1}},\Log_{\,b}(\EL_{n-1})\right)$ that it is a root of. Since there are only finitely many coefficients, they are all algebraic combinations of $b^{z_1},\dots,b^{z_r}$ with $z_j\in\EL_{n-1}$ and $w_1,\dots,w_s$ with $b^{w_j}\in\EL_{n-1}$ (i.e., $w_j$ are logarithms with base $b$ of elements in $\EL_{n-1}$). By construction, 
$$
\xi\in\ov{\Q\left(z_1,\dots,z_r,w_1,\dots,w_s,b^{z_1},\dots,b^{z_r},b^{w_1},\dots,b^{w_s}\right)}.
$$
We then apply the same procedure to each $z_j$ and $b^{w_j}$, and continue until we get to elements in $\EL_0=\Q$. Collecting all $z$-s and $w$-s so produced gives us a ladder to $\xi$.
\end{proof}
The ladders produced by descent can have many redundant rungs, as the preceding fields may already contain the elements needed to proceed. To apply the Schanuel conjecture, we need to trim them to weed out at least linear redundancy. In other words, we will show that ladders with linearly independent elements suffice. Moreover, these elements can be chosen linearly independent of $1$ (and hence of $\Q$) as well. This last observation will be instrumental in showing that $\ln b$ is not generated by any ladder.
\begin{lemma}[\textbf{Reduction}]\label{linred} Any ladder to $\xi$ can be reduced to a ladder with $1,a_1,\dots,a_m$ linearly independent over $\Q$.
\end{lemma}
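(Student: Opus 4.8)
The plan is to induct on the length $m$ of the ladder, peeling off linearly dependent rungs one at a time while preserving the ladder property and the fact that $\xi$ is still generated. Suppose $1,a_1,\dots,a_m$ are linearly dependent over $\Q$. Pick the largest index $k$ such that $a_k$ lies in the $\Q$-span of $1,a_1,\dots,a_{k-1}$; say $a_k = q_0 + \sum_{j<k} q_j a_j$ with $q_j\in\Q$. The key observation is that both $a_k$ and $b^{a_k}$ are then already algebraic over $F_{k-1}$: indeed $a_k\in\Q(a_1,\dots,a_{k-1})\subseteq F_{k-1}$ directly, and $b^{a_k} = b^{q_0}\prod_{j<k}(b^{a_j})^{q_j}$, which is a product of radicals (the $q_j$ are rational) of elements of $F_{k-1}$, hence lies in $\ov{F_{k-1}}$. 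Consequently the rung $a_k$ is redundant: deleting it from the list, the new fields $F'_i$ satisfy $\ov{F'_i}=\ov{F_i}$ for all $i$ (for $i<k$ nothing changed; for $i\geq k$ we have thrown out $a_k,b^{a_k}$ from the generating set, but both were already in $\ov{F_{k-1}}\subseteq\ov{F'_{k-1}}$, so the algebraic closures agree). In particular the ladder property is preserved at every surviving rung and $\xi\in\ov{F_m}=\ov{F'_{m-1}}$, so we have a shorter ladder to $\xi$.

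First I would check the base case $m=0$, where the statement is vacuous, and note that the process above strictly decreases $m$, so it terminates. When it terminates, $1,a_1,\dots,a_m$ are linearly independent over $\Q$, which is exactly the claim. The one point requiring a word of care is that when we delete rung $k$, the later rungs $a_{k+1},\dots,a_m$ were originally required to satisfy "$a_i\in\ov{F_{i-1}}$ or $b^{a_i}\in\ov{F_{i-1}}$" with the old $F_{i-1}$; after deletion we must verify the same with the new $F'_{i-1}=\Q(a_1,\dots,\widehat{a_k},\dots,a_{i-1},b^{a_1},\dots,\widehat{b^{a_k}},\dots,b^{a_{i-1}})$. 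But $\ov{F'_{i-1}}=\ov{F_{i-1}}$ as just argued, so the disjunction transfers verbatim.

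The main obstacle — really the only subtlety — is bookkeeping with the radicals: one must use that the coefficients $q_j$ in a $\Q$-linear relation are rational (not merely algebraic), so that $(b^{a_j})^{q_j}$ makes sense as an element of the algebraic closure $\ov{F_{k-1}}$ for a fixed choice of branch, and that $b^{q_0}$, being a rational power of the fixed algebraic base $b$, is algebraic. This is where linear — as opposed to algebraic — independence is exactly the right notion to aim for: a $\Q$-linear dependence among the $a_j$ translates into a \emph{multiplicative} relation among the $b^{a_j}$ with rational exponents, which is precisely what algebraic closure absorbs. Finally, the strengthening to independence over $1$ as well costs nothing: we simply included the constant $1$ in the list from the start and ran the same argument, so the output ladder has $1,a_1,\dots,a_m$ linearly independent over $\Q$, as stated.
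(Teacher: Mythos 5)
Your proposal is correct and follows essentially the same route as the paper: express the redundant rung as $a_k=q+\sum_{j<k}q_ja_j$, observe that $a_k\in F_{k-1}$ and $b^{a_k}=b^q\prod_{j<k}\big(b^{a_j}\big)^{q_j}\in\ov{F_{k-1}}$ since $b^q$ is algebraic and rational powers of field elements lie in the algebraic closure, then delete the rung and iterate. Your extra care in checking that the algebraic closures $\ov{F'_i}=\ov{F_i}$ are unchanged after deletion (so the ladder property and the generation of $\xi$ both survive) is a welcome tightening of a step the paper passes over with ``the corresponding fields will not be affected, except for reindexing.''
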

\begin{proof}
Suppose $a_k$ is a linear combination with rational coefficients of the preceding elements of the ladder, i.e., $a_k=q+\sum_{j=1}^{k-1}q_ja_j$ with $q,q_j\in\Q$. Then $a_k\in F_{k-1}$ and 
$$
b^{a_k}=b^q\prod_{j=1}^{k-1}\big(b^{a_j}\big)^{q_j}\in\ov{F_{k-1}},
$$
because $b^q\in\oQ$, and rational powers of field elements are in its algebraic closure. Therefore, we can remove $a_k$ from the ladder and it will still remain a ladder. The corresponding fields will not be affected, except for reindexing, so $\xi$ is still generated by the ladder without $a_k$. Repeating the process, if necessary, we produce a reduced ladder to $\xi$.
\end{proof}
For the next lemma we will finally have to use the Schanuel conjecture. Assuming it, the ascent along a reduced ladder produces fields that are algebraically independent of $\ln b$. By the previous lemma, the same is true for unreduced ladders as well, but we need linear independence over $\Q$ to apply the conjecture at each step. To wit, climbing up any ladder does not bring us to $\ln b$, that is to $\pi$ when $b=-1$.
\begin{lemma}[\textbf{Ascent}]\label{ascent} Assuming the Schanuel conjecture, for a reduced ladder exactly one of each $a_k$, $b^{a_k}$ is transcendental over $F_{k-1}$, and those transcendental numbers are algebraically independent of each other and $\ln b$. In particular, $\ln b\not\in\ov{F_m}$.
\end{lemma}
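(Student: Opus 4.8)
The plan is to route the reduced ladder through the Schanuel conjecture using the auxiliary numbers $z_0:=\ln b$ and $z_k:=a_k\ln b$ for $k=1,\dots,m$, whose exponentials are $e^{z_0}=b$ and $e^{z_k}=b^{a_k}$. First I would check that $z_0,\dots,z_m$ are linearly independent over $\Q$, so the conjecture applies: a rational relation $\sum_j q_jz_j=0$ collapses to $\bigl(q_0+\sum_{k\ge1}q_ka_k\bigr)\ln b=0$, and since $b\neq1$ forces $\ln b\neq0$, this is a $\Q$-linear relation among $1,a_1,\dots,a_m$, which are independent over $\Q$ by the Reduction Lemma~\ref{linred}; hence all $q_j=0$. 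The Schanuel conjecture then yields that at least $m+1$ of the $2(m+1)$ numbers $z_0,\dots,z_m,b,b^{a_1},\dots,b^{a_m}$ are algebraically independent over $\Q$. Since $b$ is algebraic it lies in no algebraically independent set, so $\Q\bigl(z_0,\dots,z_m,b^{a_1},\dots,b^{a_m}\bigr)$ has transcendence degree at least $m+1$ over $\Q$.

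Next I would produce a matching upper bound from the ladder structure. Build this field up by adjoining to $\Q$, in order, $z_0$, then $b=e^{z_0}$ (algebraic, contributing nothing), and then, for $k=1,\dots,m$, the pair $z_k,b^{a_k}$; write $M_{k-1}$ for the field reached just before rung $k$. Because $a_k=z_k/z_0$, the field $M_{k-1}$ contains $F_{k-1}$, and the ladder condition says that one of $a_k$ (hence $z_k=a_kz_0$) or $b^{a_k}$ is already algebraic over $F_{k-1}$, so rung $k$ raises the transcendence degree by at most one. The total is thus at most $1+m=m+1$, which together with the Schanuel bound makes it exactly $m+1$.

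Equality forces tightness at every step: $\ln b$ must be transcendental, and at each rung exactly one of $z_k,b^{a_k}$ is transcendental over $M_{k-1}$ — namely $b^{a_k}$ when $a_k\in\ov{F_{k-1}}$ (then $z_k$ is already algebraic over $M_{k-1}$), and $a_k$ when $b^{a_k}\in\ov{F_{k-1}}$ (then $z_k$ is the transcendental one, and $a_k=z_k/z_0$ with $z_0$ a nonzero element of $M_{k-1}$). In either case this element, call it $t_k$, is transcendental over $F_{k-1}$, and it is the only one of $a_k,b^{a_k}$ that is, the other lying in $\ov{F_{k-1}}$. Since $M_{k-1}$ contains $\ln b$ together with $t_1,\dots,t_{k-1}$, the element $t_k$ is in fact transcendental over $\Q(\ln b,t_1,\dots,t_{k-1})$, so an induction gives that $\{\ln b,t_1,\dots,t_m\}$ is algebraically independent over $\Q$. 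For the final assertion I would climb the ladder once more to check that $F_m\subseteq\ov{\Q(t_1,\dots,t_m)}$, so $\ov{F_m}$ has transcendence degree at most $m$; as it already contains $t_1,\dots,t_m$, it cannot also contain $\ln b$ without producing an algebraically independent set of size $m+1$, whence $\ln b\notin\ov{F_m}$.

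I expect the main obstacle to be the bookkeeping that ties these pieces together — reconciling Schanuel's ``at least half'' with the rung-by-rung upper bound so that both inequalities become equalities, and carefully passing between the numbers $z_k=a_k\ln b$ that fit the hypothesis of the conjecture and the numbers $a_k,b^{a_k}$ that appear on the ladder, all while keeping $\ln b$ inside the fields $M_{k-1}$ so that algebraic independence propagates all the way up.
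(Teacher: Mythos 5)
Your proof is correct and follows essentially the same route as the paper: both multiply the ladder by $\ln b$ to fit Schanuel's template, use the Reduction Lemma for the required linear independence, and play the ladder condition (at least one of $a_k$, $b^{a_k}$ algebraic over $F_{k-1}$) off against Schanuel's lower bound to force exactly one new transcendental per rung. The only difference is organizational: the paper runs an induction on $k$, invoking Schanuel afresh at each stage, whereas you invoke it once for the full ladder and extract the rung-by-rung tightness from a global transcendence-degree squeeze ($\geq m+1$ from Schanuel, $\leq m+1$ from the ladder structure), which is an equally valid way to do the same bookkeeping.
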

\begin{proof} Let $a_1,\dots,a_m$ be a reduced ladder so that $1,a_1,\dots,a_m$ are linearly independent over $\Q$. We will proceed by induction on $k$.

Since $1,a_1$ are linearly independent so are $\ln b, a_1\ln b$. Therefore, by the Schanuel conjecture, at least two of $\ln b, a_1\ln b,e^{\ln b}=b,e^{a_1\ln b}=b^{a_1}$ are algebraically independent. But $b$ is algebraic, and either $a_1$ or $b^{a_1}$ is in $\ov{F_0}=\oQ$ by definition of a ladder. In the former case, $a_1\ln b$ is an algebraic multiple of $\ln b$, hence algebraically dependent on it, so $\ln b$, $b^{a_1}$ must be algebraically independent. In the latter case, $\ln b, a_1\ln b$ must be algebraically independent, and hence so are $\ln b, a_1$. In particular, $\ln b\not\in\ov{F_1}$. This establishes the base of induction.

Suppose our claim holds up to $k-1$. Since $1,a_1,\dots,a_k$ are  linearly independent, so are $\ln b,a_1\ln b,\dots,a_k\ln b$, and, by the Schanuel conjecture, at least $k+1$ of 
$$
\ln b,a_1\ln b,\dots,a_k\ln b,\,b,\,b^{a_1},\dots,\,b^{a_k}
$$
are algebraically independent. We can drop the $\ln b$ multiples from $a_j$ because $\ln b$ is the first number on the list and this does not affect algebraic dependence. By the induction hypothesis, for $j\leq k-1$ we can define $b_j:=a_j$ when $a_j$ is transcendental over $F_{j-1}$, and $b_j:=b^{a_j}$ when $b^{a_j}$ is. Since the other numbers in each pair are algebraic in the preceding $b_j$, the algebraically independent $k+1$ numbers must be among $\ln b,b_1,\,\dots,\,b_{k-1},\,a_k,\,b^{a_k}$. 

By definition of a ladder, at least one of $a_k,\,b^{a_k}$ is algebraic over $F_{k-1}$, so, to get to $k+1$, exactly one of them must be transcendental over $F_{k-1}$. We can denote it $b_k$, in agreement with the previous notation. Then $\ln b,b_1,\,\dots,\,b_k$ are algebraically independent, $\ov{F_k}=\ov{F_{k-1}(b_k)}$, and $\ln b\not\in\ov{F_k}$ since it is transcendental over $F_{k-1}$ and algebraically independent of $b_k$. This completes the induction.
\end{proof}
Our main theorem is now a simple consequence, and so are Conjecture \ref{ConEL} and the negative answer to Question \ref{QRRA}. 
\begin{theorem}\label{pinEL} Assuming the Schanuel conjecture, $\pi$ and $e$ are not algebraically based, and neither is $\ln a$ for any algebraically based $a\neq0,1$.
\end{theorem}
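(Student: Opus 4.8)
The plan is to read the theorem off the three lemmas just proved together with the structural facts about $\EL^{(b)}$ from Section \ref{ExpLog}. First I would combine Descent (Lemma \ref{descent}) and Reduction (Lemma \ref{linred}): every $\xi\in\EL=\EL^{(b)}$ lies in $\ov{F_m}$ for some reduced ladder $a_1,\dots,a_m$ with $1,a_1,\dots,a_m$ linearly independent over $\Q$. Ascent (Lemma \ref{ascent}) then gives $\ln b\notin\ov{F_m}$ for every such ladder, and since this holds for all reduced ladders, $\ln b\notin\EL^{(b)}$. When $b$ is algebraic, $b\neq0,1$, we have $\EL^{(b)}=\EL^{alg}$ by Definition \ref{AlgBased}, so, conditionally on Schanuel, $\ln b\notin\EL^{alg}$ for every algebraic $b\neq0,1$. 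This is the only step where anything happens; the rest is bookkeeping with Proposition \ref{ELprop} and Euler's formula.

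Next I would specialize to $b=-1$, so $\ln(-1)=i\pi\notin\EL^{alg}$. Since $i\in\oQ\subseteq\EL^{alg}$ and $\EL^{alg}$ is a field, $\pi\in\EL^{alg}$ would force $i\pi\in\EL^{alg}$, a contradiction; hence $\pi$ is not algebraically based. This is Conjecture \ref{ConEL}, and because $\SA\subseteq\EL^{(-1)}=\EL^{alg}$, it also gives $\pi\notin\SA$, i.e., a negative answer to Question \ref{QRRA}. For $e$: if $e\in\EL^{alg}=\EL^{(-1)}$ then $\EL^{(e)}\subseteq\EL^{(-1)}$ by Proposition \ref{ELprop}(ii); but $\pi=\frac1i\ln(-1)\in\EL^{(e)}$, so $\pi\in\EL^{alg}$, contradicting what we just showed. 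Hence $e$ is not algebraically based, which strengthens Hermite's theorem conditionally on Schanuel.

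Finally, for $\ln a$ with $a\neq0,1$ algebraically based, I would invoke the last sentence of Theorem \ref{PiAlt}: since $\pi$ is not algebraically based (the ``former case''), at most one of $a,\ln a$ is algebraically based, so $\ln a\notin\EL^{alg}$. Equivalently, one can apply Proposition \ref{ELprop}(iv) directly with $c=e$, $b=-1$: we have just shown $\log_e(-1)=i\pi\notin\EL^{(-1)}$, which by that proposition puts at most one of $a$ and $\log_e a=\ln a$ into $\EL^{(-1)}=\EL^{alg}$.

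I do not expect a genuine obstacle at this point, since the difficulty has been front-loaded into Lemma \ref{ascent}, where Schanuel is used to keep $\ln b$ algebraically independent of everything produced along a reduced ladder. The one point needing a word of care in the write-up is that Descent must reach an \emph{arbitrary} element of the union $\EL=\bigcup_{n\ge0}\EL_n$, not just of some fixed $\EL_n$; this is automatic, because for a given $\xi$ one first picks the finite $n$ with $\xi\in\EL_n$ and then runs the descent from there.
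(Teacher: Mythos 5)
Your proposal is correct and follows essentially the same route as the paper: combine Descent, Reduction, and Ascent to get $\ln b\notin\EL^{(b)}$ for algebraic $b\neq0,1$, specialize to $b=-1$ to conclude $i\pi\notin\EL^{alg}$, and then dispatch the remaining claims via Theorem \ref{PiAlt} (equivalently Proposition \ref{ELprop}(iv)). The only cosmetic difference is your argument for $e$, which goes through $\pi\in\EL^{(e)}$ and Proposition \ref{ELprop}(ii), whereas the paper applies the ``at most one of $a,\ln a$'' clause directly with $\ln e=1\in\EL^{alg}$; both are valid one-liners.
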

\begin{proof}
It is a direct consequence of Lemmas
\ref{descent}--\ref{ascent} that $\ln b\not\in\EL^{(b)}$. Indeed, any element of $\EL^{(b)}$ has a ladder to it, even a reduced one, but $\ln b$ does not. Therefore, $\ln(-1)=i\pi\not\in\EL^{(-1)}=\EL^{alg}$. The remaining claims follow from Theorem \ref{PiAlt}. In particular $e\notin\EL^{alg}$ because $\ln e=1\in\EL^{alg}$.
\end{proof}
\noindent The conclusion of the theorem is a strengthening of Lindemann's and Hermite's results that $\pi$ and $e$ are not algebraic. We cannot generate them starting from rational numbers even if we exponentiate and take logarithms in addition to taking algebraic closures. Natural logarithms of integers and rational numbers, like $\ln 2$, are also beyond reach by these means. All of them are examples of non-algebraically based ``elementary" numbers. Alas, all of this is only conditional.

\section{Related work and open questions.}\label{Conc}

Even if the Schanuel conjecture is true we still cannot answer the title question with a definitive no. This is because our approved uses of the quadratrix were too restrictive. Geometrically, we only allowed intersecting it with horizontal and radial lines. Indeed, the Cartesian equation of the quadratrix (for the radius $1$) is $y=x\tan\big(\frac{\pi}{2}y\big)$. Intersecting it with a horizontal line $y=b$ geometrically produces $x=b/\tan\big(\frac{\pi}{2}b\big)$. And intersecting it with the radial line $y=mx$ produces $y=\frac2\pi\arctan(m)$. These are algebraically related to $\sin(\pi b)$ and $\frac1\pi\arcsin(m)$, the functional counterparts of the right anglesector and its reverse from Theorem \ref{AnalAngsec}. 

Intersecting the quadratrix with more general lines already leads to new numbers. For example, intersecting it with a vertical line $x=a$ would produce roots of a transcendental equation $y-a\tan\big(\frac{\pi}{2}y\big)=0$. This is a kind of equation whose solutions are not even ``elementary" in Ritt's sense, assuming the Schanuel conjecture \cite{Chow,Lin}. In exponential algebra, one can define an extension of $\EL^{alg}$ that would cover intersections not only with all lines, but also with circles, conic sections, other quadratrices, and even with some other transcendental curves, like Archimedean spirals. This is the field of {\it exponentially algebraic numbers}, which are roots of any non-degenerate exponential-logarithmic systems of equations, not just algebraic ones \cite{Kir10}. We will go out on a limb of geometric intuition and conjecture that even those do not suffice to square the circle.
\begin{conjecture}\label{ConELhat} $\pi$ is not exponentially algebraic.
\end{conjecture}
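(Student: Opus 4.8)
\medskip

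\noindent\textbf{Toward a proof.} The plan is to rerun the descent, reduction and ascent argument of Section~\ref{piLadder} with the field $\EL^{alg}$ of algebraically based numbers replaced by the larger field $\EL^{\wedge}$ of exponentially algebraic numbers, i.e.\ the exponential-algebraic closure of $\Q$ in $\C$ in the sense of \cite{Kir10}, which contains $\EL^{alg}$. Since $i\in\oQ\subseteq\EL^{\wedge}$ and $\EL^{\wedge}$ is a field, Conjecture~\ref{ConELhat} is equivalent to $i\pi=\ln(-1)\notin\EL^{\wedge}$, exactly as Conjecture~\ref{ConEL} was reduced to $i\pi\notin\EL^{alg}$ in the proof of Theorem~\ref{pinEL}. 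A preliminary point must be pinned down first: ``non-degenerate exponential-logarithmic system'' has to be read in the strong sense of Zilber and Kirby --- the solution variety is required to be \emph{free} and \emph{rotund}, not merely to carry an isolated solution with nonsingular Jacobian --- since otherwise $\pi$ would be captured by a degenerate system such as the one behind $e^{i\pi}=-1$. With the right notion in hand, one should also check that $\EL^{\wedge}$ really is the field that accounts for intersecting the quadratrix with lines, circles, conics, other quadratrices and Archimedean spirals \emph{without} taking limits, so that the conjecture carries the geometric meaning we want.

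\medskip

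\noindent\textbf{Descent and reduction.} I would first enlarge Definition~\ref{ladder} by allowing, besides an algebraic rung ($a_k\in\ov{F_{k-1}}$) and a logarithmic rung ($b^{a_k}\in\ov{F_{k-1}}$), a \emph{system rung}: a block $a_k,\dots,a_{k+r}$ forming a free, rotund solution over $\ov{F_{k-1}}$ of an algebraic system in the variables and their exponentials. The analogue of the descent lemma (Lemma~\ref{descent}) --- that every $\xi\in\EL^{\wedge}$ lies on such a generalized ladder --- is in essence the structure theorem for the exponential-algebraic closure operator from \cite{Kir10}, which produces $\EL^{\wedge}$ from $\Q$ by iterating exactly these operations together with algebraic closure. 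For the reduction lemma (Lemma~\ref{linred}) one trims a generalized ladder so that $1,a_1,\dots,a_m$ are linearly independent over $\Q$: algebraic and logarithmic rungs are handled verbatim, while a system rung is cut down using freeness itself, since a free system over $\ov{F_{k-1}}$ is precisely one whose block of exponents is $\Q$-linearly independent modulo the preceding field --- which is what makes the block eligible to be fed into the Schanuel conjecture.

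\medskip

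\noindent\textbf{Ascent, and the main obstacle.} Assuming the Schanuel conjecture, the last step is to show by induction along a reduced generalized ladder that $\ln b=i\pi$ (taking $b=-1$, as in the proof of Theorem~\ref{pinEL}) stays exponentially transcendental over each $\ov{F_k}$, so $i\pi\notin\ov{F_m}$ and therefore $\pi\notin\EL^{\wedge}$. Algebraic and logarithmic rungs reproduce Lemma~\ref{ascent} line for line, and the whole difficulty is concentrated in the inductive step across a \emph{system} rung: assuming $i\pi$ exponentially transcendental over $F_{k-1}$, one must show that adjoining a free, rotund solution of a system over $F_{k-1}$ cannot make $i\pi$ exponentially algebraic --- conceptually, the pregeometry property of the exponential-algebraic closure with the extra parameter $i\pi$ carried along. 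To run the induction, however, one has to control not a single system but whole \emph{parametrized families} of free, rotund systems uniformly, so that no continuous family of them can conspire to pin $i\pi$ down; this appears to require a \emph{uniform} (matrix) form of the Schanuel conjecture together with Zilber's strong exponential-algebraic closedness properties, and neither of these is known to follow from the Schanuel conjecture alone --- even the unconditional backbone of this structure theory, the Thumbtack Lemma and its relatives in \cite{Kir10}, is delicate. Closing this gap, i.e.\ upgrading the descent and ascent lemmas from purely exponential and logarithmic rungs to arbitrary non-degenerate systems, is precisely the step we do not see how to carry out, which is why Conjecture~\ref{ConELhat} is offered only as a conjecture.
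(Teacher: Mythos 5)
The statement you were asked about is not proved in the paper at all: it is stated as Conjecture~\ref{ConELhat}, and the authors immediately remark that proving it ``seems to be much harder than our Theorem~\ref{pinEL}, even assuming the Schanuel conjecture,'' citing only Terzo's conditional result on exponential rings as the best partial progress. Your write-up correctly recognizes this: you do not claim a proof, you outline the natural strategy (extend the ladder machinery of Section~\ref{piLadder} by ``system rungs'' for free, rotund solutions of exponential-algebraic systems in the sense of \cite{Kir10}), and you locate the obstruction exactly where the paper's authors implicitly do --- the ascent step across a system rung, which would require uniform control over parametrized families of such systems rather than a single application of the Schanuel conjecture per rung. That diagnosis is consistent with the paper and is a useful elaboration of why the conjecture is left open.

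Two small cautions. First, your parenthetical that a free system ``is precisely one whose block of exponents is $\Q$-linearly independent modulo the preceding field'' overstates the case: freeness in the Zilber--Kirby sense also excludes generic multiplicative dependencies among the exponentials (and rotundity is a separate dimension-counting condition), so the reduction step for system rungs is not quite the verbatim analogue of Lemma~\ref{linred} that this phrasing suggests. Second, since your text will be read alongside a \emph{conjecture}, it should not be formatted as a proof environment; as a ``Toward a proof'' discussion it is fine, but nothing in it should be taken as establishing the statement, and indeed you say as much in your closing sentence.
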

\noindent Proving it seems to be much harder than our Theorem \ref{pinEL}, even assuming the Schanuel conjecture. The best result in this direction is due to Giuseppina Terzo \cite{Terzo}, who proved conditionally that the only non-trivial relation between $\pi$ and $i$ in exponential {\it rings} is Euler's $e^{i\pi}=-1$. If $\pi$ was a root of some exponential equation, like $x+(-1)^x=0$, there would be other relations. Unfortunately, exponential rings only allow integer coefficients in equations, which is a severe restriction.

There are analogs of straightedge and compass constructions and the quadrature problem for them in hyperbolic geometry \cite{Jagy}. The 1948 impossibility result for it is due to Nikolai Nestorovich. We are not aware of hyperbolic analogs of the quadratrix, but one can define hyperbolic analogs of the right anglesector and its reverse and ask the question about squaring hyperbolic circles with them. One would expect hyperbolic functions to play a role analogous to trigonometric functions in the Euclidean quadrature, but the impossibility of quadrature is unclear because the corresponding tower might be $\EL^{(e)}$ instead of $\EL^{alg}$.

The structure of the field of algebraically based numbers $\EL^{alg}$ is of interest in its own right. We proved (conditionally) that at most one of $a$ and $\ln a$ is algebraically based, but it may well be neither for cardinality reasons. In its iterative construction, only countably many elements are added to $\Q$ at each step, so $\EL^{alg}$ is countable. But $\R$ and $\C$ are not, so for `almost all' real or complex $a$, neither $a$ nor $\ln a$ is algebraically based. More explicitly, complex solutions to $x=\ln x$ are not  algebraically based. But what if we restrict the question to Ritt's ``elementary" numbers? In particular, are $\ln\pi$ or $e^e$  algebraically based, assuming the Schanuel conjecture? 

The exponential-logarithmic towers $\EL^{(b)}$ present other interesting algebraic questions. They are partially ordered by inclusion, which arranges numbers $b$ into an exponential-logarithmic hierarchy of transcendence. What kind of hierarchy is it? For example, are any two $\EL^{(a)}$ and $\EL^{(b)}$ always contained in some $\EL^{(c)}$? Recall that under the Schanuel conjecture $\EL^{alg}\subsetneqq\EL^{(e)}$. Are their transcendental numbers $b$ intermediate between algebraically based and ``elementary" numbers, i.e., such that $\EL^{alg}\subsetneqq\EL^{(b)}\subsetneqq\EL^{(e)}$?

In recent years, there has been some progress towards proving the Schanuel conjecture itself. It builds on the work of Angus Macintyre and Alex Wilkie in 1990's that applied methods of the model theory from mathematical logic to transcendental number theory. More recently, Boris Zilber axiomatized properties of $\C$ with the standard exponentiation, and proved that there exists a unique exponential field satisfying them and the Schanuel conjecture. A natural conjecture is that it is $\C$ itself \cite{Zil05}. Some of the model-theoretic results were reproduced algebraically and extended by Kirby, who was able to prove that the Schanuel conjecture has at most countably many ``essential" counterexamples (those not induced by simpler ones) \cite{Kir10}.

It is remarkable that some fairly simple geometric constructions still lead to open questions in transcendental number theory. After 2,500+ years, the quadrature remains defiant! We hope that the readers will be encouraged to take up its challenges at the intersection of geometry and algebra.

\bibliographystyle{vancouver}

\begin{thebibliography}{100}

\bibitem{Baragar} Baragar, A., Constructions using a compass and twice-notched straightedge, American Mathematical Monthly, 109 (2002) no.2, 151-164.

\bibitem{Bos} Bos, H., Redefining geometrical exactness, Springer, NY, 2001.

\bibitem{Cheng} Cheng, C. et al., Some consequences of Schanuel’s conjecture, Journal of Number Theory, 129 (2009) 1464–1467.

\bibitem{Chow} Chow, T., What is a closed-form number? American Mathematical Monthly, 106 (1999), no. 5, 440–448.

\bibitem{Crippa} Crippa, D., The impossibility of squaring the circle in the 17th century, Springer, Cham, 2019.

\bibitem{Gleason} Gleason, A., Angle trisection, the heptagon, and the triskaidecagon, American Mathematical Monthly, 95 (1988) no.3, 185-194.

\bibitem{Hart} Hartshorne, R., Geometry: Euclid and beyond, Springer-Verlag, New York, 2000

\bibitem{Jagy} Jagy, W., Squaring circles in the hyperbolic plane, Mathematical Intelligencer, 17 (1995) no.2, 31-36.

\bibitem{JMP} Jones, A., Morris, S., Pearson, K., Abstract algebra and famous impossibilities, Universitext,
Springer-Verlag, New York, 1991.


\bibitem{Kir10} Kirby, J., Exponential algebraicity in exponential fields, Bulletin of the London Mathematical Society, 42 (2010) no.5, 879-890.

\bibitem{Knorr} Knorr, W., Archimedes and the spirals: the heuristic background, Historia Mathematica, 5 (1978) no.\,1, 43-75.

\bibitem{Lang71} Lang, S., Transcendental numbers and Diophantine approximations, Bulletin of the American Mathematical Society, 77 (1971), 635-677. 

\bibitem{Lehmer} Lehmer, D., A note on trigonometric algebraic numbers,
American Mathematical Monthly, 40 (1933) no. 3, 165-166.

\bibitem{Lin} Lin, F., Schanuel's conjecture implies Ritt's conjectures, Chinese Journal of Mathematics, 11 (1983) no.1, 41-50.

\bibitem{LutzenW} L\"utzen, J., Why was Wantzel overlooked for a century? The changing importance of an impossibility result, 
Historia Mathematica, 36 (2009) no. 4, 374-394.

\bibitem{Lutzen} L\"utzen, J., The algebra of geometric impossibility: Descartes and Montucla on the impossibility of the duplication of the cube and trisection of the angle, Centaurus, 52 (2010) 4-37.

\bibitem{Mancosu} Mancosu, P., Descartes and mathematics, in {\it A Companion to Descartes}, ch.\,7, 2007, 103-123.

\bibitem{Niven} Niven, I., Numbers: rational and irrational, Random House, New York, 1961.

\bibitem{Olm} Olmsted, J., Rational values of trigonometric functions, American Mathematical Monthly, 52 (1945) no. 9, 507-508.

\bibitem{Ritt} Ritt, J., Integration in finite terms: Liouville's theory of elementary methods, Columbia University Press, New York, 1948.

\bibitem{Sefrin} Pappus of Alexandria, Collection, book 4, edited and translated by H. Sefrin-Weis, Springer, New York, 2010.

\bibitem{Terzo} Terzo, G., Some consequences of Schanuel's conjecture in exponential rings,
 Communications in Algebra, 36 (2008) no. 3, 1171-1189. 

\bibitem{Waerden} van der Waerden, B., Algebra, vol. 1, Springer, New York, 1991.

\bibitem{Zil05} Zilber, B., Pseudo-exponentiation on algebraically closed fields of characteristic zero, Annals of Pure and Applied Logic, 132(1) (2005) 67-95.

\end{thebibliography}

\end{document}